\documentclass[11pt]{article}
\usepackage{appendix}
\usepackage{mathtools}
\usepackage[T1]{fontenc}
\usepackage{amsfonts}
\usepackage{amsmath}
\usepackage{amssymb}
\usepackage{amsthm}
\usepackage{thmtools}
\usepackage{bbm}
\usepackage{bm}
\usepackage{mathrsfs}
\usepackage{color}
\usepackage{pdfsync}

\usepackage{enumitem}
\usepackage{xcolor} 
\definecolor{darkgreen}{rgb}{0,0.5,0} 
\definecolor{darkbkue}{rgb}{1,0,0} 
\usepackage{natbib}
\setcitestyle{authoryear,aysep={,}}  %
\usepackage[colorlinks=true, linkcolor=darkgreen, citecolor=darkgreen, urlcolor=blue]{hyperref}
\makeatletter
\renewcommand\@makefnmark{%
  \hbox{\textsuperscript{\textcolor{darkgreen}{\@thefnmark}}}}
\makeatother
\usepackage{tikz}
\usetikzlibrary{patterns}
\usepackage{mathpazo} %

\DeclareMathOperator*{\argmin}{argmin}

\newcommand{\RR}{\mathbb{R}}
\newcommand{\R}{\RR}

\newcommand{\NN}{\mathbb{N}}

\newcommand{\eps}{ \varepsilon}
\usepackage[margin=25 mm]{geometry}
\newcommand{\mykill}[1]{}
\usepackage[capitalize, nameinlink]{cleveref}

\newlist{hypotheses}{enumerate}{1}
\setlist[hypotheses]{
    label=\textcolor{darkgreen}{(H\arabic*)},
    ref=(H\arabic*)
}

\newlist{assumptions}{enumerate}{1}
\setlist[assumptions]{
    label=\textcolor{darkgreen}{(A\arabic*)},
    ref=(A\arabic*)
}
\theoremstyle{plain}
\newtheorem{theorem}{Theorem}[section]

\newtheorem{lemma}[theorem]{Lemma}

\theoremstyle{definition}\newtheorem{definition}[theorem]{Definition}
\newtheorem{remark}[theorem]{Remark}

\theoremstyle{remark}

\newlist{myenum}{enumerate}{3}
\setlist[myenum,1]{label={\rm (H\arabic*)},
                   ref  ={\rm (H\arabic*)}}
\crefname{myenumi}{property}{properties}

\renewenvironment{thebibliography}[1]{%
\begin{oldthebibliography}{#1}%
\setlength{\baselineskip}{.9em}
\linespread{1}
\small
\setlength{\parskip}{.40ex}%
\setlength{\itemsep}{.30em}%
}%
{%
\end{oldthebibliography}%
}

\definecolor{grape}{rgb}{0.43, 0.17, 0.71}

\begin{document}

\title{ \huge Breakdown properties of optimal transport maps:  general transportation costs }
\date{}
\author{  
  Alberto Gonz{\'a}lez-Sanz
  \and Marco Avella Medina
  \thanks{Department of Statistics, Columbia University, \textcolor{darkgreen}{alberto.gonzalezsanz@columbia.edu}, \textcolor{darkgreen}{marco.avella@columbia.edu}. This research was partly
supported by the  NSF grant DMS-2310973 (Avella Medina)}   
  }
  
\maketitle 
\vspace{-1.5em}
\begin{abstract}
Two recent works, Avella-Medina and González-Sanz (2026) and Passeggeri and Painda\-veine (2026), studied the robustness of the optimal transport map through its breakdown point, i.e., the smallest fraction of contamination that can make the map take arbitrarily aberrant values. Their main finding is the following: let $P$ and $Q$ denote the target and reference measures, respectively, and let $T$ be the optimal transport map for the squared Euclidean cost. 
Then, the breakdown point of $T(u)$, when $P$ is perturbed and $Q$ is fixed, coincides with the Tukey depth of $u$ relative to $Q$. In this  note, we extend this result to general convex cost functions, demonstrating that the cost function does not have any impact on the breakdown point of the optimal transport map.    Our contribution provides a definitive characterization of the breakdown point of the optimal transport map. In particular, it shows that for a broad class of regular cost functions, all transport-based quantiles enjoy the same high breakdown point properties.  
\end{abstract}

 \vspace{1em}

{\small
\noindent \emph{Keywords} Breakdown point; Multivariate quantiles; Optimal Transport; Robustness; Transport map.

\noindent \emph{MSC2020 subject classifications. Primary: } 62G35, 62G30.  
}
 \vspace{.2em}

 \section{Introduction}
We consider the  general Monge  optimal transport problem \citep{monge1781,GangboMcCann.96}
\begin{align}\label{eq:Monge}\tag{MP}
\inf_{T:\, T\#Q=P}\int_{\mathbb{R}^d} c({x},T({x})) d Q({x}). 
\end{align}
Here $c:\mathbb{R}^d\times \mathbb{R}^d\to \mathbb{R}_{\geq 0}$ is the cost function and $T\#P$ denotes the \emph{push-forward} measure, that is, the measure such that for each measurable set $A$ we have $T\#P(A):=P(T^{-1}(A))$. We are interested in the solution $T$ of \eqref{eq:Monge}, i.e.~the Monge map. Optimal transport and Monge maps are important tools in applied mathematics \citep{Santambrogio-Book} that in recent years have been used to develop interesting methodology in statistics \citep{HallinDelBarrioCuestaAlbertosMatran.21,chewi:nilesweed:rigollet2025}  and data science \citep{PeyreCuturi.19}.  

The robustness properties of the Monge map are not yet well-established, as highlighted by \citet{ronchetti2023}. Key concepts from robust statistics---such as the influence function for local stability and the breakdown point for global reliability---lack a comprehensive theory in this context. The influence function is a G\^ateaux derivative under Dirac-mass perturbations, but only directional derivatives under certain smooth perturbations have been studied for transport maps \citep{Loeper.09,gonzálezsanz2024linearizationmongeampereequationsdata}. Hence an influence function framework is still absent.

A significant recent development concerns the breakdown point. In parallel works, \citet{paindaveine:passeggeri2024} and \citet{Avella-Gonzalez.25} established a remarkable connection for the squared Euclidean cost: the breakdown point of the Monge map (under perturbations of the target measure \( P \)) evaluated at a point \( u \) is precisely the Tukey depth of \( u \) with respect to the source measure \( Q \). Their proofs, although quite different, hinge on the monotonicity of the Monge map and appear difficult to extend beyond the quadratic cost.

In this note, we generalize this result. We prove that the equivalence between the breakdown point and the Tukey depth holds for costs of the form \( c(x,y) = h(x-y) \), under the standard regularity conditions of \cite{GangboMcCann.96}. %
A particularly interesting consequence of this finding is that  for all such general cost functions, the resulting transport-based quantile estimators enjoy the same high breakdown points. \cite{Avella-Gonzalez.25,paindaveine:passeggeri2024} had shown that the transport-based quantiles built with the quadratic cost, as proposed in \cite{HallinDelBarrioCuestaAlbertosMatran.21, ghosal:sen2022}, enjoy essentially the same breakdown as univariate quantiles. Our work shows that the same result is true for a general class of strictly convex transportation costs. This is in sharp contrast to M-quantiles \citep{breckling:chambers1988} and  spatial quantiles \citep{chaudhuri1996,dudley:koltchinskii1992}, as the choice of the cost function in these cases  determines the breakdown point  of those multivariate quantiles \citep{konen:paindavein2025,konen:paindaveine2026}.

 \section{Main results}
 \subsection{Background in optimal transportation}
 
   The existence and uniqueness of Monge  maps for general cost functions was shown by \cite{GangboMcCann.96} under the following assumptions.  Let $\|\cdot\|$ denote the Euclidean norm and $c(x,y)=h(x-y)$ be such that  
\begin{assumptions}
\item\label{Stricl-convex} 
$h: \R^d\to [0, \infty)$ is strictly convex;
\item\label{COne} 
given a height $r>0$ and an angle $\theta \in (0,\pi) $, there exists some $M:=M(r, \theta)>0$ such that for all $\|{p} \|>M$, one can find a (truncated) cone 
	\begin{align*}
		\mathrm{Cone}(r, \theta, {z},{p}):=\left\lbrace {x}\in \R^d : \| {x}-{p}\|\| {z}\|\cos(\theta/2)\leq \left\langle {z},{x}-{p} \right\rangle\leq  r\| {z}\| \right\rbrace,
	\end{align*}
	with vertex at ${p}$ (and $ { z}\in \R^d \setminus \{{0}\}$) on which $h$ attains its maximum at ${p}$;
\item\label{Coercive} 
$\lim_{\|{x} \| \rightarrow \infty}\frac{h({x})}{\|{x} \| }= \infty $.
\end{assumptions}
Assumptions \ref{Stricl-convex} and \ref{Coercive} are easy to understand from an optimization standpoint. In contrast, Assumption \ref{COne} is geometric in nature as it requires that the level sets $ \{ z \in \mathbb{R}^d : h(z) \leq \lambda \}$ have a vanishing curvature as $\lambda\to\infty$. Note that the cost $c(x,y)=\|x-y\|^p$ satisfies this condition for $p>1$. Other relevant examples are the cost functions implicitly defined in \cite[p.~1151]{catoni2012}.

The characterization of Monge maps is typically achieved through the superdifferentials of $c$-concave functions; see \cite{Villani.09}. These can be seen as generalizations of subdifferentials of convex functions. Recall that a function $f \colon \mathbb{R}^d \to \mathbb{R} \cup \{-\infty\}$ is called \textit{$c$-concave} if it can be expressed as
\begin{equation}
    \label{CLTgeneq:c_concave}
f(x) = \inf_{(y, t) \in \mathcal{T}} \big\{ c(x, y) - t \big\},
\end{equation}
for some set $\mathcal{T} \subset \mathbb{R}^d \times \mathbb{R}$.  For such a function, the \textit{$c$-superdifferential} $\partial^c f$ is the set of pairs $(x, y) \in \mathbb{R}^d \times \mathbb{R}^d$ satisfying
\begin{equation}
    \label{eq:def-superdiff}
    f(z) \leq f(x) + \big[ c(z, y) - c(x, y) \big] \quad \text{for all } z \in \mathbb{R}^d.
\end{equation}
Note that in the case of the quadratic cost $h(x-y)=\|x-y\|^2$, one recovers the usual subdifferentials. Let $\mathcal{L}_d$ be the $d$-dimensional Lebesgue measure. 
We denote by $\partial^c f(x)$ the set of points $y$ such that $(x, y) \in \partial^c f$, and for any $U \subset \mathbb{R}^d$, we define $\partial^c f(U) = \bigcup_{x \in U} \partial^c f(x)$. For $\mathcal{L}_d$-a.e.~$x$ in the domain of $f$,  $\partial^c f(x)$ is a singleton (see \cite{GangboMcCann.96}). In this case we denote the unique element of $\partial^c f(x)$ by $\nabla^c f(x)$.  A set $\Gamma \subset \R^d \times \R^d$ is called $c$-\textit{cyclically monotone} if for any finite number of points $(x_1, y_1), (x_2, y_2), \dots, (x_N, y_N)\in \Gamma$, and for any permutation $\sigma$ of $\{1, \dots, N\}$, the following inequality holds:
\[
\sum_{i=1}^{N} c(x_i, y_i) \le \sum_{i=1}^{N} c(x_i, y_{\sigma(i)}).
\]
A celebrated result in convex analysis shows that the subdifferentials of convex functions are maximal cyclically monotone \citep{Rockafellar.70}. A similar relation was also shown to hold between  $c$-{cyclically monotone} sets and $c$-concave functions in \cite{Smith-Knott}; see also \cite[Theorem~2.7]{GangboMcCann.96}.
\begin{theorem}\label{theorem-c-cycl} A set $\Gamma\subset \R^d\times \R^d$ is $c$-cyclically 
    monotone if and only if there exists a $c$-concave function $f:\R^d\to \R\cup \{-\infty\} $ such that $\Gamma\subset \partial^c f$.  
\end{theorem}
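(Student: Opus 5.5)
The plan is the Rockafellar-type construction adapted to the cost $c$; I prove the two implications separately, and the construction for the "only if" direction is where the work lies.

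\emph{Easy direction.} Suppose $\Gamma\subset\partial^c f$ with $f$ $c$-concave. Take $(x_1,y_1),\dots,(x_N,y_N)\in\Gamma$ and a permutation $\sigma$. The superdifferential inequality \eqref{eq:def-superdiff} at $(x_{\sigma(i)},y_{\sigma(i)})$ with $z=x_i$ gives
\[
f(x_i)-f(x_{\sigma(i)})\le c(x_i,y_{\sigma(i)})-c(x_{\sigma(i)},y_{\sigma(i)}).
\]
The values $f(x_i)$ are finite, since points of $\partial^c f$ have $f(x)>-\infty$ (the convention is that $\partial^c f$ only contains points where $f$ is finite). Summing over $i$ and reindexing $\sum_i c(x_{\sigma(i)},y_{\sigma(i)})=\sum_i c(x_i,y_i)$, the left side telescopes to $0$. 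This yields $\sum_i c(x_i,y_i)\le\sum_i c(x_i,y_{\sigma(i)})$.

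\emph{Hard direction.} Assume $\Gamma$ is $c$-cyclically monotone and nonempty; if $\Gamma=\emptyset$, any $c$-concave $f$ works. Fix $(x_0,y_0)\in\Gamma$ and define
\[
f(x)=\inf\Big\{ c(x,y_N)-c(x_N,y_N)+\sum_{i=0}^{N-1}\big[c(x_{i+1},y_i)-c(x_i,y_i)\big]\Big\},
\]
where the infimum runs over all $N\ge 0$ and all $(x_1,y_1),\dots,(x_N,y_N)\in\Gamma$.

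First, $f$ is $c$-concave. For each chain, the expression has the form $c(x,y_N)-t$, with $t$ depending only on the chain. So $f$ is of the form \eqref{CLTgeneq:c_concave}, with $\mathcal{T}$ the set of all such pairs $(y_N,t)$.

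Second, $f(x_0)\ge 0$, hence $f(x_0)$ is finite. Evaluating the chain expression at $x=x_0$ gives a closed cycle, and $c$-cyclic monotonicity applied to the cyclic shift permutation makes it nonnegative. Taking $N=0$ gives the value $0$, so $f(x_0)=0$.

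Third, I show $\Gamma\subset\partial^c f$. Take $(x',y')\in\Gamma$ and any admissible chain ending at some $(x_N,y_N)$. Extend it by $(x_{N+1},y_{N+1})=(x',y')$. Plugging this extended chain into the definition of $f$ at a point $z$ gives
\[
f(z)\le c(z,y')-c(x',y')+\big[\text{old chain value evaluated at }x'\big].
\]
Taking the infimum over the old chains gives $f(z)\le f(x')+c(z,y')-c(x',y')$ for all $z$, which is \eqref{eq:def-superdiff}.

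The one subtle point is that $f(x')$ must be finite (not $-\infty$) for this inequality to be meaningful. Applying the inequality with $(x',y')=(x_0,y_0)$, or directly, gives $f(x')\le c(x',y_0)-c(x_0,y_0)<\infty$. A lower bound comes from applying the inequality at $z=x_0$ along $(x',y')$:
\[
0=f(x_0)\le f(x')+c(x_0,y')-c(x',y').
\]
Hence $f(x')>-\infty$.

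Finiteness of $c$, which is real-valued by \ref{Stricl-convex}, ensures all quantities are real. I expect this finiteness check to be the main obstacle; the rest is bookkeeping.
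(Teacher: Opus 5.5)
Your proof is correct and is the standard Rockafellar-type chain construction (fix $(x_0,y_0)\in\Gamma$, take the infimum over finite chains, use the cyclic shift to get $f(x_0)=0$, extend chains by one link to obtain the superdifferential inequality, then check finiteness), which is how Theorem~2.7 of Gangbo and McCann is proved; the paper itself only cites that result and gives no proof of its own. Your convention that $\partial^c f$ contains only points where $f$ is finite is genuinely needed, since under the paper's literal definition $f\equiv-\infty$ is $c$-concave with $\partial^c f=\R^d\times\R^d$. One minor remark: your chains may repeat elements of $\Gamma$, so cyclic monotonicity must be read for finite families with repetitions, which is harmless because a closed walk splits into simple cycles.
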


Let $Q$ be absolutely continuous with respect to the $d$-dimensional Lebesgue measure, written as  $Q \ll \mathcal{L}_d$. The following  key result is due to \cite{GangboMcCann.96}. 

\begin{theorem}\label{Theorem-Gangbo-McCann}\cite[Theorems 3.7 and 4.5]{GangboMcCann.96}
    Assume that $Q\ll \mathcal{L}_d$ and that $c$ satisfies \ref{Stricl-convex}-\ref{Coercive}. Then,  there exists a unique $Q$-a.e.~defined Borel map $T_{Q\to P}$ such that 
\begin{enumerate}
    \item $T_{Q\to P}(x) \in \partial^c f_{Q\to P}(x)$  for $Q$-a.e.~$x\in \R^d$ and some $c$-concave $f_{Q\to P}$; 
    \item $T_{Q\to P}(x)=\nabla^c f_{Q\to P}(x) $ for $Q$-a.e.~$x\in \R^d$; 
    \item $ T_{Q\to P}$ pushes $Q$ forward to $P$. 
\end{enumerate} 
Moreover, if $\int_{\mathbb{R}^d} c({x},y)   dQ(x) d P (y)<\infty, $ then $T_{Q\to P}$ is the unique solution of \eqref{eq:Monge}.
\end{theorem}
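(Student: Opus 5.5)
This is the Gangbo–McCann theorem, which the paper cites rather than proves. I would follow their route: first build an optimal plan, then show its support is the graph of a map given by $\nabla^c$ of a $c$-concave function, and finally prove uniqueness.

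\textbf{Step 1: an optimal plan and a potential.} I first truncate $P$ and $Q$ to bounded sets, or otherwise reduce to the case $\int c\,dQ\,dP<\infty$. Kantorovich's problem then has a minimizer $\gamma$, by tightness of couplings and lower semicontinuity of the cost. A standard perturbation argument shows that $\operatorname{supp}\gamma$ is $c$-cyclically monotone. \Cref{theorem-c-cycl} then gives a $c$-concave $f$ with $\operatorname{supp}\gamma\subset\partial^c f$. In the general case, where the cost integral may be infinite, I would build $\gamma$ as a limit of cyclically monotone plans. Closedness of cyclical monotonicity under weak limits of supports keeps the conclusion $\operatorname{supp}\gamma\subset \partial^c f$.

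\textbf{Step 2: regularity of $f$ and the map.} This is where \ref{COne} and \ref{Coercive} enter, and I expect it to be the main obstacle. The difficulty is that $h$ is only convex, not smooth or uniformly convex, so I must show that $f$ is locally Lipschitz on the interior of its domain.

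I would use \ref{Coercive} to show that $\partial^c f$ is locally bounded. The cone condition \ref{COne} then gives a uniform local semiconcavity-type bound: on a cone with vertex at $p=x-y$, $h$ is maximized at $p$, so each $c(\cdot,y)$ is controlled from above near $x$ uniformly in large $y$. Hence $f$, an infimum of such functions, is locally Lipschitz. Rademacher's theorem then gives differentiability $\mathcal L_d$-a.e., and so $Q$-a.e. since $Q\ll\mathcal L_d$.

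At a differentiability point $x$, take $y\in\partial^c f(x)$. The definition \eqref{eq:def-superdiff} yields $\nabla f(x)\in \partial_x c(x,y)=\partial h(x-y)$. Strict convexity \ref{Stricl-convex} makes $(\partial h)^{-1}=\nabla h^*$ single valued, so $y=x-\nabla h^*(\nabla f(x))$ is unique. This defines $\nabla^c f$ and shows that $\gamma=(\mathrm{id},T)\#Q$ with $T=\nabla^c f$, which proves items 1–3.

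\textbf{Step 3: uniqueness.} Suppose $T'=\nabla^c g$ is another such map pushing $Q$ to $P$. Consider the plan $\frac12[(\mathrm{id},T)\#Q+(\mathrm{id},T')\#Q]$. Gangbo–McCann's argument compares the two potentials on sets where $f-g$ attains a local-maximum-type configuration. They show that $\partial^c f$ and $\partial^c g$ must agree $Q$-a.e.: otherwise a set of positive $Q$-measure would be sent by one map into a region with strictly smaller $P$-mass than the other assigns, contradicting $T\#Q=T'\#Q=P$. This is the measure-theoretic lemma of their Section 4 (an Aleksandrov-type argument).

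When $\int c\,dQ\,dP<\infty$, every optimal plan has $c$-cyclically monotone support, so by Step 2 it is induced by such a map. The uniqueness in Step 3 then identifies it with $T_{Q\to P}$, which gives that $T_{Q\to P}$ is the unique solution of \eqref{eq:Monge}.
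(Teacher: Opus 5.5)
The paper does not prove this result. It imports it from \cite[Theorems~3.7 and~4.5]{GangboMcCann.96}, so retracing Gangbo--McCann is the right plan. Several of your ingredients are sound: the coupling with $c$-cyclically monotone support obtained by approximation, the inclusion $\nabla f(x)\in\partial h(x-y)$ at differentiability points, the inversion $y=x-\nabla h^*(\nabla f(x))$, and the reduction of the final assertion to uniqueness of the map.

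The genuine gap is in Step~2, whose mechanism does not work as stated. Assumption \ref{COne} gives no semiconcavity-type bound and does not control $c(\cdot,y)$ on a neighbourhood of $x$. It only gives $h(\cdot-y)\le h(x-y)$ on a cone with vertex at $x$, and an infimum of functions bounded above on cones need not be Lipschitz. Conversely, \ref{Coercive} alone does not make $\partial^c f$ locally bounded. If $y_n\in\partial^c f(x_n)$, $x_n\to x$ and $\|y_n\|\to\infty$, it only gives $f(z_n)\to-\infty$ at the nearby points $z_n=x_n-\epsilon(x_n-y_n)/\|x_n-y_n\|$. That contradicts nothing unless you already know $f$ is bounded below near $x$, which is part of what must be proved.

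The two hypotheses have to be combined the other way round, as in the proof of \Cref{lemma:technical}, parts (i) and (iii):
\begin{itemize}
\item \ref{Coercive} sends $f$ to $-\infty$ at the points $z_n$.
\item Take $(y_n',t_n')$ nearly optimal for $f(z_n)$ in \eqref{CLTgeneq:c_concave}. The $y_n'$ must escape to infinity, since otherwise $f\equiv-\infty$.
\item \ref{COne} applied at $z_n-y_n'$ then gives $f\le f(z_n)+1$ on cones of fixed height and aperture with vertex $z_n$.
\item Hence $f=-\infty$ on an open subset of ${\rm int}({\rm dom} f)$, a contradiction.
\end{itemize}
The same argument shows that $\partial^c f$ is nonempty and locally bounded on ${\rm int}({\rm dom} f)$. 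Local Lipschitz continuity then follows at once from $f(z')-f(z)\le h(z'-y_z)-h(z-y_z)$ with $y_z\in\partial^c f(z)$, because $h$ is Lipschitz on bounded sets. The cone condition plays no further role at that stage.

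There are two further points. First, Rademacher's theorem on ${\rm int}({\rm dom} f)$ gives differentiability $Q$-a.e.\ only if $Q({\rm int}({\rm dom} f))=1$. From ${\rm supp}\,\gamma\subset\partial^c f$ you only get $Q({\rm dom}(\partial^c f))=1$, so you also need ${\rm dom} f$ to have a Lebesgue-null boundary. This is the near-convexity of the domain: there is a convex $S$ with ${\rm int}(S)\subset{\rm dom}(f)\subset S$. The paper quotes it from \cite[Theorem~3.3 and Proposition~C.4]{GangboMcCann.96} in the proof of \Cref{lemma:lower-bound}, and it is missing from your sketch.

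Second, Step~3 points to the Aleksandrov-type lemma of Gangbo--McCann rather than giving an argument. That matches the paper's level of detail, since the paper also just cites it. However, the averaged plan you introduce plays no role there. It helps only when both plans are already known to be optimal (finite cost), whereas uniqueness of a map satisfying items 1--3 is asserted with no integrability assumption.
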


\subsection{Breakdown point of the Monge map}
The breakdown point quantifies an estimator's robustness by measuring the minimum proportion of outliers required to make the estimator take arbitrarily aberrant values \citep{huber:ronchetti2009}.  
To define the notion of breakdown point of $ T_{Q\to P}(u)$ we need to have  $ T_{Q\to P}$  well-defined everywhere in ${\rm supp}(Q)$. However, the mapping  $ T_{Q\to P}$ is only defined $Q$-a.e., by \Cref{Theorem-Gangbo-McCann}. Then a natural extension of $ T_{Q\to P}(u)$ is given by any set-valued $c$-superdifferential   $$ {\bf T}_{Q\to P}=\partial^c f_{Q\to P}: \R^d\to \{A: A\subset \R^d\},$$
such that $\partial^c f_{Q\to P}(x)= \{ {T}_{Q\to P}(x)\}$ for $Q$-a.e.~$x\in \R^d$ %
;   see also Section 2.2 of  \cite{Avella-Gonzalez.25}. 
\begin{definition}
    The breakdown point of ${\bf T}_{Q\to P}(u)$  is defined as 
  $${\rm BP}({\bf T}_{Q\to P}(u), P)= \inf \left\{ \varepsilon\in (0,1): \sup_{{\bf T}_\varepsilon\in \Gamma_\varepsilon(P)} \sup_{v\in {\bf T}_\varepsilon (u)} \|v\| = \infty\right \},$$
where $\Gamma_\eps(P)$ denotes the set of ${\bf T}_{Q\to (1-\eps )P+\eps \mu}$ for some probability measure $\mu$. 
\end{definition}
 Now we can state the announced result. We recall that the Tukey depth \citep{Tukey.1975} of $u$ with respect to $Q$ is
$$ {\rm TD}(u;Q)=\inf_{v\in \mathbb{S}^{d-1}} Q(\{ x: \langle v, x-u\rangle \leq 0\}), $$
where $\mathbb{S}^{d-1}$ denotes the unit sphere. 
\begin{theorem}\label{theo:Breakdown}
Assume that $c$ satisfies \ref{Stricl-convex}-\ref{Coercive} and $Q \ll \mathcal{L}_d$. Fix   $u\in \R^d$  such that ${\rm TD}(u;Q)\in [0,1/2]$.     Then, 
   $$ {\rm BP}({\bf T}_{Q\to P}(u), P) = 
{\rm TD}(u;Q).$$ 
\end{theorem}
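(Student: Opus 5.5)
Two inequalities are needed, and each will be proved separately.

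\emph{Upper bound: ${\rm BP}\le {\rm TD}(u;Q)$.} Fix $\varepsilon>{\rm TD}(u;Q)$ and pick $v\in\mathbb{S}^{d-1}$ with $Q(H)<\varepsilon$, where $H=\{x:\langle v,x-u\rangle\le 0\}$. Take $\mu=\delta_{y_n}$ with $y_n=-n v$, or more conveniently a small Gaussian blob centred there. Then $P_n=(1-\varepsilon)P+\varepsilon\mu$, and $T_n:=T_{Q\to P_n}$ sends a set $A_n$ of $Q$-mass $\varepsilon$ onto the contamination. Assume for contradiction that $\sup_n\sup_{w\in{\bf T}_n(u)}\|w\|<\infty$.

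The key tool is $c$-cyclical monotonicity of ${\rm graph}({\bf T}_n)$, which holds by \Cref{theorem-c-cycl} with $N=2$. Fix $x\in A_n$ and $w\in{\bf T}_n(u)$, and write $y=T_n(x)$, which is close to $y_n$. Then
\[
h(x-y)+h(u-w)\le h(x-w)+h(u-y).
\]
I want to show that, for $n$ large, this forces $x$ into a slightly enlarged half-space $H_\delta=\{\langle v,x-u\rangle\le\delta\}$ for most of $A_n$. Heuristically, when $y$ runs to infinity along $-v$, the difference $h(x-y)-h(u-y)$ behaves like $\langle\nabla h(-y),x-u\rangle$. Assumptions \ref{COne} and \ref{Coercive} guarantee that $\nabla h(nv)$ becomes large and its direction is controlled, so that $\langle\nabla h(n v), x-u\rangle\to+\infty$ whenever $\langle v,x-u\rangle>\delta$. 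Meanwhile the terms $h(u-w)$ and $h(x-w)$ stay bounded for $x$ in a compact set. The result is $Q(A_n\cap K\setminus H_\delta)\to0$ for every compact $K$. This yields $\varepsilon\le Q(H_\delta)+o(1)$, and letting $\delta\downarrow0$ (using $Q\ll\mathcal L_d$, so the hyperplane is null) gives $\varepsilon\le Q(H)$, a contradiction.

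\emph{This directional step is the main obstacle.} For the quadratic cost it is plain monotonicity. For general $h$, $\nabla h(nv)$ need not be parallel to $v$, so I would instead choose the contamination along a direction $z_n$ with $\nabla h(z_n)/\|\nabla h(z_n)\|=v$. This requires that the direction map of $\nabla h$ be surjective onto the sphere for large arguments, which follows from strict convexity plus superlinearity, since $\nabla h$ is onto $\mathbb{R}^d$. The cone condition \ref{COne} is what I would use to obtain the uniform asymptotic control of the increments of $h$ above.

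\emph{Lower bound: ${\rm BP}\ge {\rm TD}(u;Q)$.} Fix $\varepsilon<{\rm TD}(u;Q)$ and suppose $w_n\in{\bf T}_n(u)$ with $\|w_n\|\to\infty$ for contaminations $\mu_n$. Along a subsequence $w_n/\|w_n\|\to$ some direction. Since $(1-\varepsilon)P$ is tight, there is a compact $B$ with $P_n(B)\ge(1-\varepsilon)(1-\eta)$. By the pushforward property, the set $G_n=T_n^{-1}(B)$ has $Q(G_n)\ge(1-\varepsilon)(1-\eta)$.

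Apply the two-point $c$-monotonicity inequality to $(u,w_n)$ and $(x,T_n(x))$ with $x\in G_n$. Running the same asymptotic argument in reverse, now with $w_n$ escaping and $T_n(x)$ bounded, gives that $G_n$ lies, up to vanishing $Q$-mass on compacts, in a half-space $\{\langle v_n,x-u\rangle\le\delta\}$ with $v_n$ the normalised $\nabla h$-direction associated with $u-w_n$. Hence
\[
(1-\varepsilon)(1-\eta)\le \sup_{v} Q(\{\langle v,x-u\rangle\le\delta\})+o(1).
\]
The complement of $G_n$, which has mass at most about $\varepsilon$, must then contain the opposite open half-space, whose mass is at least ${\rm TD}(u;Q)-o_\delta(1)$. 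Letting $\eta,\delta\to0$ gives ${\rm TD}(u;Q)\le\varepsilon$, a contradiction.

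The restriction ${\rm TD}\le1/2$ is used only to ensure the infimum defining ${\rm BP}$ ranges consistently over $(0,1)$. Finally, tightness of the $Q$-mass outside compacts is handled by choosing $K$ with $Q(K^c)<\eta$.
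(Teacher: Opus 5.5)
\paragraph{Gap in the lower bound.} The gap is in the lower bound, which is where the difficulty of a general cost actually lies. ``Running the same asymptotic argument in reverse'' does not work.

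\emph{Upper bound.} Here you choose the escaping point. With $y_n=u-\nabla h^*(nv)$, which is exactly the contamination point of \Cref{lemma-Upper}, you have $nv\in\partial h(u-y_n)$. The subgradient inequality then gives the exact bound $h(x-y_n)-h(u-y_n)\ge n\langle v,x-u\rangle$. No expansion and no use of \ref{COne} is needed, and your upper bound becomes the paper's argument.

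\emph{Lower bound.} Here the escaping point $w_n\in\mathbf T_n(u)$ is given to you. From $h(u-w_n)-h(x-w_n)\le C$, convexity only yields a lower bound $\langle g_x,u-x\rangle$ with $g_x\in\partial h(x-w_n)$, and the direction of $g_x$ depends on $x$. A subgradient at $u-w_n$ bounds the left-hand side from above, which is useless. To confine $G_n\cap K$ to a half-space you need two facts:
\begin{itemize}
\item all $g_x$, $x\in K$, asymptotically point in one common direction, uniformly in $x$;
\item $\|g_x\|\to\infty$.
\end{itemize}
This is precisely what \ref{COne} must supply, and the paper extracts it through \Cref{lemma:technical} together with \Cref{lemma:do-not-scape}. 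Your proposal offers only the heuristic that increments ``behave like'' $\langle\nabla h,x-u\rangle$. That is false as an expansion: $h(z)=e^{\|z\|}$ satisfies \ref{Stricl-convex}--\ref{Coercive}, yet its linearization error is of the same order as $\|\nabla h\|$. Only sign information survives, and you give no argument for it.

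\paragraph{How to complete your route.} Your approach can be finished, and it would then be a genuinely different, somewhat leaner proof. The missing lemma is: if $\|p\|\to\infty$ and $\|e\|\le D$, then any $g_p\in\partial h(p)$ and $g_{p+e}\in\partial h(p+e)$ have asymptotically equal directions.

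\emph{Proof sketch.} Suppose $h(p+e)\le h(p)$. The cone from \ref{COne} at $p+e$, with axis $z$, height $r\gg D$ and angle $\theta$, lies in $\{h\le h(p)\}$. Hence $\langle g_p,y-p\rangle\le 0$ on it. Since the cone contains $p+e+rw$ for every unit $w$ with $\angle(w,z)\le\theta/2$, this forces $\angle(-g_p,z)\le\pi/2-\theta/2+\arcsin(D/r)$. Likewise $\angle(-g_{p+e},z)\le\pi/2-\theta/2$. Now let $\theta\to\pi$ and $r\to\infty$.

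\emph{Conclusion.} Set $\hat p_n=p_n/\|p_n\|$ with $p_n\in\partial h(u-w_n)$. By \ref{Coercive}, $\|g_x\|\ge (h(x-w_n)-h(0))/\|x-w_n\|\to\infty$. Combined with the lemma, this gives $h(u-w_n)-h(x-w_n)\ge\|g_x\|(\delta-o(1))\to\infty$ uniformly on $K\cap\{\langle\hat p_n,u-x\rangle>\delta\}$. Hence $G_n\cap K\subset\{\langle\hat p_n,x-u\rangle\ge-\delta\}$ for large $n$. This is the reverse of the inequality you wrote, but that is harmless for the counting. Your tightness-based counting then closes the argument.

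\paragraph{Comparison with the paper.} In your route, tightness of $P$ directly supplies bounded partners $(x,T_n(x))$, replacing \Cref{lemma:do-not-scape}. You also avoid normalizing $f_n$ and constructing points $w_n\to u$ with $f_n(w_n)\to-\infty$ as in \Cref{lemma:technical}(i). Finally, $h$ need not be differentiable, so work with $\partial h$ and $\nabla h^*$ rather than $\nabla h$.
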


\begin{remark}[The direction of the perturbation]\label{rem:direction} For \(u \in \operatorname{supp}(Q)\), the Monge map \({\bf T}_{Q \to P}(u)\) breaks down with contamination in the direction of the mass point distribution
\(\delta_{z_n}\) (cf.~\Cref{lemma-Upper}), where 
\[
z_n = u - \nabla h^{*}(n\, v_u), 
\qquad 
v_u \in \arg\min_{v\in \mathbb{S}^{d-1}} Q\big(\{x : \langle v, x - u\rangle \le 0\}\big),
\]
and $h^*$ denotes the convex conjugate of $h$, i.e.,   $h^*(y) = \sup_{x \in \R^d} \{ \langle x, y \rangle - h(x) \}  $.  We recall here that $\nabla h^*$ is well defined (single-valued) as $h$ is strictly convex.
When \(h(x) = \|x\|^2\), the breakdown direction simplifies to \(u - n v_u\), as in 
\cite{Avella-Gonzalez.25} and \cite{paindaveine:passeggeri2024}. 
For radial costs, the contaminating direction is also of the form \(u - r_n v_u\) with \(r_n \to +\infty\). 
In general, however, the mapping \(\nabla h^{*}\) modifies this direction to reflect the geometry induced by the cost function \(h\).

\end{remark}
\subsection{Finite sample breakdown point}
We denote by $ Q_n$ (resp.~$P_n$) the empirical measure of a (deterministic or random) sequence of distinct points $u^{(n)}= \{u_1, \dots, u_n\}$ (resp.~$x^{(n)}=\{x_1, \dots, x_n\}$) such that $Q_n$ (resp.~$P_n$) converges to $Q$ (resp.~$P$)  in distribution. We are interested in the robustness properties of $ T_{Q_n\to P_n}(u_j)= x_{\sigma_{n}(j)}$, where\footnote{Note that $T_{Q_n\to P_n}$ solves the discrete version of \eqref{eq:Monge}.}  
\begin{equation}
    \label{eq:discrerte-OT}
    \sigma_n\in \argmin_{\sigma\in [[n]]} \sum_{i=1}^n c(u_i,x_{\sigma(i)}) 
\end{equation}
and $[[n]]$ denotes the set of permutations of $\{1, \dots, n\}$. From the definition of $T_{Q_n\to P_n}(u_j)$, it follows that the set $\{(u_j,T_{Q_n\to P_n}(u_j)\}_{j=1}^n=\{ (u_j, x_{\sigma_n(j)})\}_{j=1}^n $ is $c$-cyclically monotone, i.e., for every $S\subset \{1, \dots, n\}$ and every permutation  $\sigma$ of $S$, 
\begin{equation}
    \label{c-monotone-discrete}
   \sum_{i\in S} c(u_i,x_{\sigma_n(i)})  \leq \sum_{i\in S} c(u_i,x_{\sigma(i)}). 
\end{equation}
\begin{definition}
    The (replacement) finite sample breakdown point of $T_{Q_n\to P_n}(u_j)$  is defined as 
  $${\rm BP}({\bf T}_{Q_n\to P_n}(u), P)= \frac{1}{n}\cdot \min \left\{ \ell \in \{1 , \dots, n\}:  \sup_{\mu_n \in \mathcal{P}_\ell( P_n)} \|T_{Q_n\to \mu_n}(u_j)\| = \infty\right \},$$
  where $\mathcal{P}_\ell( P_n)$ denotes the set of empirical measures sharing at least $ n-\ell$ atoms with $P_n$.
\end{definition}
  We recall that a set of points  $u^{(n)}=\{u_1, \dots, u_n\}$ is in general position if no hyperplane contains  $d$ distinct points of 
  $u^{(n)}$.  We define the {\it lower Tukey depth} 
  $$ {\rm TD}^-(u_i;Q_n)=\inf_{v\in \mathbb{S}^{d-1}} Q_n(\{ x: \langle v, x-u_i\rangle < 0\}). 
  $$
   The following result characterizes the finite sample breakdown point of the OT problem.
   \begin{theorem}\label{Theorem:Discrete}
      Assume that $c$ satisfies \ref{Stricl-convex}-\ref{Coercive}.  Let $Q_n$ and $P_n$  be the empirical measures induced by $u^{(n)}$ and $x^{(n)}$ respectively.  Then, for every $j=1, \dots, n$, 
\begin{equation}
    \label{eq:discrete-BP}
     {\rm BP}({ T}_{Q_n\to P_n}(u_j), P_n) \in \left[ 
{\rm TD}^{-}(u_j;Q_n)+\frac{1}{n} , {\rm TD}(u_j;Q_n)\right] .
\end{equation}
If, furthermore, $u^{(n)}$ is in general position and $n\geq d$, then 
\begin{equation}
    \label{eq:General-Position-theorem}
    {\rm BP}({ T}_{Q_n\to P_n}(u_j), P_n) = 
{\rm TD}^{-}(u_j;Q_n)+\frac{1}{n}.
\end{equation}
  \end{theorem}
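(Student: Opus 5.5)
The plan is to prove the two inequalities in \eqref{eq:discrete-BP} separately, each with a two-point swap argument based on the $c$-cyclical monotonicity \eqref{c-monotone-discrete}. Equality \eqref{eq:General-Position-theorem} then follows from a combinatorial comparison of ${\rm TD}$ and ${\rm TD}^-$ under general position. Throughout, contamination means replacing $\ell$ atoms $x_i$ by arbitrary points, and $\sigma$ denotes an optimal permutation for the contaminated problem.

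\emph{Upper bound.} Let $k=n\,{\rm TD}(u_j;Q_n)$ and choose $v\in\mathbb{S}^{d-1}$ attaining the infimum, so that $H=\{x:\langle v,x-u_j\rangle\le 0\}$ contains exactly $k$ of the $u_i$, including $u_j$. Replace $k$ atoms of $P_n$ by distinct points $z^{(l)}_m=u_j-\nabla h^*(m_l v)$, $l=1,\dots,k$, with $m_l\to\infty$ as $m\to\infty$ (this is the construction of \Cref{rem:direction}). Suppose $T(u_j)=x$ stays bounded along the sequence, so $x$ is an uncontaminated atom. There are $k$ contaminated atoms and $u_j$ receives none of them, so at most $k-1$ are matched inside $H$. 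Hence some $z=z^{(l)}_m$ is matched to a $u_i$ with $\langle v,u_i-u_j\rangle>0$. The swap inequality gives $c(u_j,x)+c(u_i,z)\le c(u_j,z)+c(u_i,x)$. By convexity and $\nabla h(u_j-z)=m_lv$,
$$h(u_i-z)-h(u_j-z)\ \ge\ m_l\langle v,u_i-u_j\rangle\to\infty,$$
while $c(u_i,x)-c(u_j,x)$ is bounded. This is a contradiction, so the map breaks down with $\ell=k$.

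\emph{Lower bound.} Suppose that with $\ell$ replacements $y_m=T_m(u_j)$ satisfies $\|y_m\|\to\infty$; we show $\ell\ge n{\rm TD}^-(u_j;Q_n)+1$. Eventually $y_m$ is a contaminated atom. For every index $i$ matched to one of the $\ge n-\ell$ uncontaminated (fixed, bounded) atoms $x$, the swap inequality and convexity give
$$\langle \nabla h(u_i-y_m),u_i-u_j\rangle\ \ge\ h(u_i-y_m)-h(u_j-y_m)\ \ge\ c(u_j,x)-c(u_i,x)\ \ge -C.$$
Set $w_m=\nabla h(u_j-y_m)$; by \ref{Coercive}, $\|w_m\|\to\infty$. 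Pass to a subsequence with $w_m/\|w_m\|\to v$. The key claim is that $\nabla h(u_i-y_m)/\|\nabla h(u_i-y_m)\|\to v$ as well, since the arguments differ by the bounded vector $u_i-u_j$. Granting this, dividing by the norm yields $\langle v,u_i-u_j\rangle\ge 0$ for all such $i$. Together with $u_j$ itself, at least $n-\ell+1$ sample points lie in $\{\langle v,x-u_j\rangle\ge 0\}$. So the open halfspace $\{\langle v,x-u_j\rangle<0\}$ holds at most $\ell-1$ points, which gives ${\rm TD}^-(u_j;Q_n)\le(\ell-1)/n$, as desired.

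The main obstacle is the direction-alignment claim: for a bounded shift $a$, the normalized gradients of $h$ at $p$ and $p+a$ must have the same limit as $\|p\|\to\infty$. This is exactly where \ref{COne} (flattening of the level sets) must enter. My first attempt would use \ref{COne} with the cone condition at $p=u_j-y_m$: $h$ is maximized at $p$ on a cone of fixed height and aperture, and convexity forces $\nabla h(p)$ to make an obtuse angle with the cone axis. Letting $\theta\to\pi$ then pins down the direction uniformly over bounded shifts. If this fails, I would fall back on working with the $c$-concave potential of \Cref{theorem-c-cycl}.

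\emph{General position.} Under general position and $n\ge d$, the hyperplane through $u_j$ orthogonal to an optimal $v$ for ${\rm TD}^-$ contains at most $d-1$ other sample points. A small rotation about $u_j$ moves them into the open side $\{\langle v,x-u_j\rangle>0\}$ without changing the strict side. The resulting closed halfspace then contains only the ${\rm TD}^-$-count plus $u_j$. Hence ${\rm TD}\le{\rm TD}^-+1/n$, and \eqref{eq:discrete-BP} collapses to \eqref{eq:General-Position-theorem}.
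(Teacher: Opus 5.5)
Your proposal is correct. The upper bound and the general-position step follow the paper, while the lower bound takes a genuinely different route.

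\textbf{Upper bound and general position.} \Cref{lemma:discrete-upper} uses the same contamination direction $u_j-\nabla h^*(\cdot\,v)$ and the same two-point swap. The only difference is that it stacks the $\ell$ new atoms at a single point of mass $\ell/n$, whereas you use distinct points. The rotation $w_\beta$ in \Cref{lem:finite_sample_UB} is exactly your argument that ${\rm TD}(u_j;Q_n)\le {\rm TD}^-(u_j;Q_n)+1/n$ under general position. For that rotation you need a single $v_0\perp v$ with $\langle v_0,u_i-u_j\rangle>0$ for every other sample point on the hyperplane. This comes from linear independence of those differences, i.e.\ the affine independence supplied by general position, not from the count $d-1$ alone.

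\textbf{Lower bound.} The paper embeds each contaminated matching in $\partial^c f_k$ via \Cref{theorem-c-cycl}. It then places $u_j$ in the interior of the domains (Gangbo--McCann, Rockafellar) and invokes \Cref{lemma:technical}~(iii) to obtain an open halfspace all of whose sample points are sent to infinity. You instead swap $u_j$ only against indices matched to original atoms, linearize by convexity, and reduce everything to asymptotic alignment of normalized subgradients. Your route is more elementary: it needs no $c$-concave potentials or interior-of-domain argument, and it isolates precisely what \ref{COne} is used for. The paper's route buys one lemma that serves both \Cref{lemma:lower-bound} and the finite-sample bound.

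\textbf{The alignment claim.} The claim is true, but a cone only at $p=u_j-y_m$ does not directly control $\partial h(u_i-y_m)$ when $h(u_i-y_m)<h(u_j-y_m)$. That cone need not lie in the smaller sublevel set. Compare levels instead. Suppose $\|p\|,\|p'\|>M(r,\theta)$, $\|p-p'\|\le R$ and $h(p)\le h(p')$. Then ${\rm Cone}(r,\theta,z,p)\subset\{h\le h(p)\}\subset\{h\le h(p')\}$. Take $s\in\partial h(p)$, $s'\in\partial h(p')$ and any unit $\hat w$ with $\langle \hat w,z\rangle\ge \|z\|\cos(\theta/2)$. Testing the subgradient inequalities at the cone point $p+r\hat w$ gives $\langle s,\hat w\rangle\le 0$ and $\langle s',\hat w\rangle\le R\|s'\|/r$. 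Letting $\theta\to\pi$ and $r\to\infty$ forces both $s/\|s\|$ and $s'/\|s'\|$ toward $-z/\|z\|$, which is the uniform-in-shift alignment you need.

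\textbf{Minor points.}
\begin{itemize}
\item Write $\partial h$ rather than $\nabla h$, since strict convexity does not give differentiability.
\item Pass to a subsequence on which the set of indices matched to original atoms is fixed.
\item The swap actually gives $h(u_i-y_m)-h(u_j-y_m)\ge c(u_i,x)-c(u_j,x)$; your sign is flipped, which is harmless.
\end{itemize}
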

In \cite{Avella-Gonzalez.25} we showed \eqref{eq:discrete-BP} for the squared Euclidean distance cost. 
Hence, the relation \eqref{eq:General-Position-theorem} is sharper than the one we had in \cite{Avella-Gonzalez.25}. We relegate the proof of \Cref{Theorem:Discrete} to \Cref{app:proof_finite_sample_BP}. %

\section{Proof of the main results}
In this section we prove \cref{theo:Breakdown}. We first introduce some notation and recall some basic concepts from convex analysis.  Then we state two technical but fundamental lemmas. Finally we show the upper and lower bounds separately. 

We denote the open ball centered at $x$  with radius
$R$ is denoted  $ \mathbb{B}_{R}(x)$ and the unit sphere by $\mathbb{S}^{d-1}$. The topological support of a probability measure $P$ (the smallest closed set with $P$-probability one) is denoted by $ {\rm supp}(P) $. The interior, closure and boundary of a set $S$ are denoted by ${\rm int}(S)$, ${\rm cl}({S})$ and ${\rm bdry}(S)$, respectively. The convex hull of a set $S$ is denoted by ${\rm coh}(S)$.   We say that $S_1$ is compactly contained in $S_2$ if ${\rm cl}({S}_1)$ is compact and ${\rm cl}({S}_1)\subset {\rm int}(S_2)$. 
In this case, we write $S_1 \subset \subset S_2$. A sequence of sets $\{A_n\}_n$ {\it escapes to the horizon} if for every $R>0$, there exists $n_R$ such that $\mathbb{B}_R(0) \cap A_n=\emptyset $ for all $n\geq n_R$. In this case we write  $A_n\to \emptyset$.

The convex conjugate of a convex function $f:\R^d\to \R\cup \{+\infty\}$ is $f^*(y) = \sup_{x \in \R^d} \{ \langle x, y \rangle - f(x) \}  $, 
and its sub-differential is 
$$ \partial f(x) = \{ y \in \mathbb{R}^d: f(z) \geq f(x) + \langle y, z - x \rangle, \ \forall z\in \mathbb{R}^d \} . $$
For a function $f:\R^d\rightarrow \R\cup \{-\infty\}$ the \emph{$c$-conjugate of $f$}  is defined as
\begin{align}\label{CLTgeneq:c_conj}
f^c({y})=\inf_{{x}\in {\mathbb{R}^d}}\{ c({x},{y})-f({x})\} \ \ \text{for all  }{y}\in {\mathbb{R}^d}.
\end{align}
If $f:\R^d\rightarrow \R\cup \{-\infty\}$ is $c$-concave, then it follows that
$ f(x) + f^c(y)\leq c(x,y) $ with equality if and only if $y\in \partial^c f(x)$; cf.~\cite[Proposition~3.3.7]{RachevRueschendorf1998}. The domains of $ f$ and $\partial^c f$ are denoted as 
$$ {\rm dom}(f)= \{ x \in \R^d: f(x) \in \R\}\quad {\rm and}\quad {\rm dom}(\partial^c f)= \{ x \in \R^d: \partial^c f(x) \neq \emptyset \} . $$

\subsection{Two technical lemmas}
We state and prove the following two technical results, which play a key role in the derivation of \cref{theo:Breakdown}. 
The proofs are inspired by those of \cite[Lemmas~3.2 and~3.3]{delBarrioGonzalezLoubes.24}. For the geometric interpretation we refer to \Cref{fig:intuition-lemma}. 

\begin{figure}[h!]
    \centering
    
\tikzset{every picture/.style={line width=0.75pt}} %

\begin{tikzpicture}[x=0.75pt,y=0.75pt,yscale=-0.8,xscale=0.8]
\draw    (514.03,82.27) -- (496.03,145.87) ;
\draw [shift={(496.03,145.87)}, rotate = 285.8] [color={rgb, 255:red, 0; green, 0; blue, 0 }  ][line width=0.75]    (0,5.59) -- (0,-5.59)   ;
\draw [shift={(514.03,82.27)}, rotate = 285.8] [color={rgb, 255:red, 0; green, 0; blue, 0 }  ][line width=0.75]    (0,5.59) -- (0,-5.59)   ;
\draw  [fill={rgb, 255:red, 130; green, 6; blue, 185 }  ,fill opacity=0.12 ] (569.33,71.5) .. controls (589.33,61.5) and (679.33,51.5) .. (659.33,71.5) .. controls (639.33,91.5) and (639.33,101.5) .. (659.33,131.5) .. controls (679.33,161.5) and (589.33,161.5) .. (569.33,131.5) .. controls (549.33,101.5) and (549.33,81.5) .. (569.33,71.5) -- cycle ;
\draw  [fill={rgb, 255:red, 184; green, 233; blue, 134 }  ,fill opacity=0.12 ][dash pattern={on 0.84pt off 2.51pt}] (426,215.91) .. controls (390.11,244.86) and (306.37,209.77) .. (236.39,135.73) .. controls (170.46,65.95) and (139.75,-14.07) .. (162.31,-51.69) -- cycle ;
\draw  [fill={rgb, 255:red, 0; green, 0; blue, 0 }  ,fill opacity=0.02 ] (40,149.5) .. controls (40,98.41) and (103.35,57) .. (181.5,57) .. controls (259.65,57) and (323,98.41) .. (323,149.5) .. controls (323,200.59) and (259.65,242) .. (181.5,242) .. controls (103.35,242) and (40,200.59) .. (40,149.5) -- cycle ;
\draw    (104.67,25.17) -- (373,273) ;
\draw  [fill={rgb, 255:red, 0; green, 0; blue, 0 }  ,fill opacity=1 ] (234.25,146.4) .. controls (234.25,144.88) and (235.48,143.65) .. (237,143.65) .. controls (238.52,143.65) and (239.75,144.88) .. (239.75,146.4) .. controls (239.75,147.92) and (238.52,149.15) .. (237,149.15) .. controls (235.48,149.15) and (234.25,147.92) .. (234.25,146.4) -- cycle ;
\draw  [fill={rgb, 255:red, 189; green, 16; blue, 224 }  ,fill opacity=0.12 ] (203.3,87.93) .. controls (221.02,72.41) and (250.78,76.48) .. (273.18,98.69) .. controls (294.02,119.34) and (301.81,149.05) .. (293.56,170.56) -- cycle ;
\draw    (237,146.4) .. controls (252.76,105.03) and (449.18,83.48) .. (500.43,106.1) ;
\draw [shift={(502.67,107.17)}, rotate = 207.14] [fill={rgb, 255:red, 0; green, 0; blue, 0 }  ][line width=0.08]  [draw opacity=0] (8.93,-4.29) -- (0,0) -- (8.93,4.29) -- cycle    ;
\draw    (266.5,96.4) .. controls (264.69,46.92) and (522.29,43.91) .. (575.39,67.09) ;
\draw [shift={(577.67,68.17)}, rotate = 207.14] [fill={rgb, 255:red, 0; green, 0; blue, 0 }  ][line width=0.08]  [draw opacity=0] (8.93,-4.29) -- (0,0) -- (8.93,4.29) -- cycle    ;
\draw [color={rgb, 255:red, 65; green, 117; blue, 5 }  ,draw opacity=0.77 ][line width=3.75]    (355.67,219.17) -- (353.34,243.2) ;
\draw [shift={(352.67,250.17)}, rotate = 275.53] [fill={rgb, 255:red, 65; green, 117; blue, 5 }  ,fill opacity=0.77 ][line width=0.08]  [draw opacity=0] (20.54,-9.87) -- (0,0) -- (20.54,9.87) -- cycle    ;
\draw [color={rgb, 255:red, 65; green, 117; blue, 5 }  ,draw opacity=0.77 ][line width=3.75]    (156.67,9.17) -- (124.44,31.21) ;
\draw [shift={(118.67,35.17)}, rotate = 325.62] [fill={rgb, 255:red, 65; green, 117; blue, 5 }  ,fill opacity=0.77 ][line width=0.08]  [draw opacity=0] (20.54,-9.87) -- (0,0) -- (20.54,9.87) -- cycle    ;

\draw (458.4,144.8) node [anchor=north west][inner sep=0.75pt]  [font=\large] [align=left] {$\displaystyle \partial ^{c} f_{n}( u)$};
\draw (361.9,65.3) node [anchor=north west][inner sep=0.75pt]  [font=\large] [align=left] {$\displaystyle \partial ^{c} f_{n}$};
\draw (571.9,90.3) node [anchor=north west][inner sep=0.75pt]  [font=\large] [align=left] {$\displaystyle \partial ^{c} f_{n}( K)$};
\draw (219.2,151.13) node [anchor=north west][inner sep=0.75pt]  [font=\Large] [align=left] {$\displaystyle u$};
\draw (137,173.23) node [anchor=north west][inner sep=0.75pt]  [font=\LARGE] [align=left] {$\displaystyle Q $};
\draw (234,91.73) node [anchor=north west][inner sep=0.75pt]  [font=\Large] [align=left] {$\displaystyle K$};
\draw (129,73.23) node [anchor=north west][inner sep=0.75pt]  [font=\Large] [align=left] {$\displaystyle H$};
\draw (333,195) node [anchor=north west][inner sep=0.75pt]   [align=left] {$ $};
\draw (348.9,176.3) node [anchor=north west][inner sep=0.75pt]  [font=\Large] [align=left] {$\displaystyle R_{n_0}$};

\end{tikzpicture}
    \caption{ \small In this picture we provide a geometric description of \cref{lemma:technical}. If $\partial^c f_n (u)$ escapes to the horizon, then there exists a sequence  $\{(w_n,q_n)\}_n$ with $q_n\in \partial^c f_n(w_n)$ and such that $\|u-w_n\|\to 0$ and $\|q_n\|~\to~\infty$. Furthermore, part (ii) states that   $\partial^c f_n(K) $ (purple part on the right in the picture) escapes to the horizon,   for any compact set $K$ (purple part on the left in the picture)  contained in the green region $ R_{n_0}=\bigcap_{n\geq n_0}\{ x: h(x-q_n) \leq  h(w_n-q_n) \} $ for some $n_0$ large enough. Part (iii) states that the green region converges as $n_0\to \infty$ to a halfspace containing $u$. }
    \label{fig:intuition-lemma}
\end{figure}
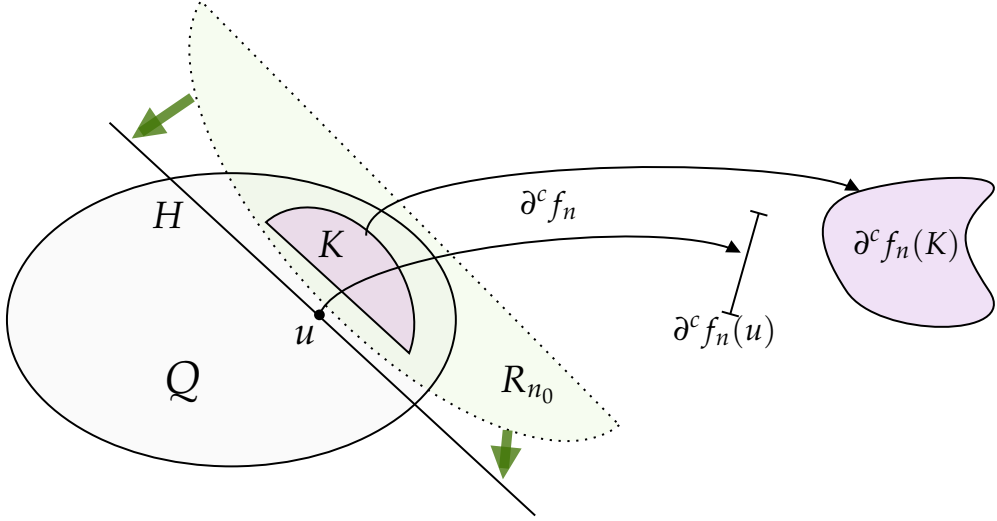

\begin{lemma}\label{lemma:technical}
    Let $f_n$ be a sequence of $c$-concave functions such that $\{\partial^c f_n\}_n \subset \R^{d}\times \R^d$ does not escape to the horizon along any subsequence. %
    Fix   $(u_n,p_n)\in\partial^c f_n  $ and assume that $\{u_n\}_n  \subset \subset  {\rm int}\left( \bigcap_n {\rm dom}(f_n) \right) $ is bounded
    and $\|p_n\|\to \infty$.
    Then the following statements hold:
    \begin{enumerate}
        \item  There exists a sequence  $\{(w_n,q_n)\}_n$ with $q_n\in \partial^c f_n(w_n)$ and such that $\|u_n-w_n\|\to 0$, $\|q_n\|~\to~\infty$ and $f_n(w_n)\to -\infty $.
        \item The sequence of sets $\{\partial^c f_{n}(K)\}_n$ escapes to the horizon, for any compact set $K$ such that for some $n_0\in \NN$, and sequences $\{(w_n,q_n)\}_n$ as defined in  $(i)$,
        \begin{equation}
        \label{eq:Compact-contained-intersect}
        K\subset  \bigcap_{n\geq n_0}\{ x \in \R^d: h(x-q_n) \leq  h(w_n-q_n) \} .
    \end{equation}
    \item   There exists $v\in \mathbb{S}^{d-1}$ such that  $\{\partial^c f_{n}(K)\}_n$ escapes to the horizon along a subsequence, for any compact set $K\subset \{z\in \R^d: \langle v, z -u\rangle<0\} $ where $u$ is any limit point of $\{u_n\}_n$. 
    \end{enumerate}

\end{lemma}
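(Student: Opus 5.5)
Throughout I use the $c$-superdifferential inequality \eqref{eq:def-superdiff} for $c(x,y)=h(x-y)$: if $q\in\partial^c f(w)$, then $f(z)\le f(w)+h(z-q)-h(w-q)$ for all $z$.

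\emph{Step 1: part (i).} Fix $r>0$ small so that $\mathbb{B}_{2r}(u_n)\subset\subset{\rm int}(\bigcap_n{\rm dom}(f_n))$ uniformly in $n$. The sequence $\{f_n\}$ must first be normalized. Since $\{\partial^c f_n\}$ does not escape to the horizon along any subsequence, after passing to subsequences I pick pairs $(a_n,b_n)\in\partial^c f_n$ with $\|a_n\|,\|b_n\|\le R_0$.

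I use the $c$-concavity bound at the base pair, $f_n(z)\le f_n(a_n)+h(z-b_n)-h(a_n-b_n)$. I also need a reverse bound of the form $f_n(a_n)\le f_n(u_n)+h(a_n-p_n)-h(u_n-p_n)$. I expect to subtract the additive constant so that $f_n(a_n)=0$; then $f_n$ is locally bounded above on compacts.

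Next I apply the inequality with the pair $(u_n,p_n)$. Take $z=u_n+t\,e_n$, where $e_n$ is chosen so that $h(z-p_n)-h(u_n-p_n)$ is very negative. Strict convexity and superlinearity \ref{Coercive} give $\|\nabla h(u_n-p_n)\|\to\infty$. Hence moving a distance $t\le r$ against $\nabla h(u_n-p_n)$ decreases $h(\cdot-p_n)$ by an amount of order $t\|\nabla h(u_n-p_n)\|\to\infty$; this is where the cone condition \ref{COne} is used to control $h$ on the cone. Consequently $f_n\to-\infty$ at points $z_n$ near $u_n$. I will then take $w_n$ a point within $o(1)$ of $u_n$ where $\partial^c f_n$ is a singleton, with $q_n=\nabla^c f_n(w_n)$.

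To see that $\|q_n\|\to\infty$: if $q_n$ stayed bounded along a subsequence, then $f_n(z)\le f_n(w_n)+h(z-q_n)-h(w_n-q_n)$. Evaluating this at $a_n$ would give $0\le f_n(w_n)+C$, contradicting $f_n(w_n)\to-\infty$. I expect the choice of $w_n$ with $f_n(w_n)\to-\infty$ to be the main obstacle. The cleanest route may be to use local semiconcavity/Lipschitz bounds of $f_n$ on the compact neighbourhood and then follow the argument of \cite[Lemma~3.2]{delBarrioGonzalezLoubes.24}.

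\emph{Step 2: part (ii).} For $x\in K$ and $n\ge n_0$, the superdifferential inequality at $(w_n,q_n)$ together with \eqref{eq:Compact-contained-intersect} gives $f_n(x)\le f_n(w_n)+h(x-q_n)-h(w_n-q_n)\le f_n(w_n)\to-\infty$, uniformly on $K$. Now take $y\in\partial^c f_n(x)$ with $x\in K$, and test the superdifferential inequality at $z=a_n$: $0=f_n(a_n)\le f_n(x)+h(a_n-y)-h(x-y)$. This forces $h(a_n-y)-h(x-y)\to\infty$ uniformly. Since $a_n$ and $x$ lie in a bounded set and $h$ is continuous, the left side is bounded whenever $y$ stays bounded. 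Hence $\inf\{\|y\|:y\in\partial^c f_n(K)\}\to\infty$, which is the claimed escape to the horizon.

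\emph{Step 3: part (iii).} Pass to a subsequence with $u_n\to u$, $w_n\to u$, and $-\nabla h(w_n-q_n)/\|\nabla h(w_n-q_n)\|\to v$. By convexity, the sublevel set $\{x:h(x-q_n)\le h(w_n-q_n)\}$ contains the region where the first-order decrease dominates. Specifically, assumption \ref{COne} ensures that the sublevel sets of $h$ at height $h(w_n-q_n)\to\infty$ flatten locally. So on any fixed ball around $u$, these sets converge in Hausdorff sense to the halfspace $\{z:\langle v,z-u\rangle\le0\}$ (with the sign of $v$ fixed accordingly). Hence any compact $K\subset\{\langle v,z-u\rangle<0\}$ lies in the intersection in \eqref{eq:Compact-contained-intersect} for $n\ge n_0$ along the subsequence, and part (ii) concludes. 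The flattening quantification, i.e.\ turning \ref{COne} into uniform inclusion of $K$, is the second delicate point.
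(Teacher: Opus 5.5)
\textbf{Verdict: the overall structure is right, but the key estimate of part (i) is unjustified and the quantitative step of part (iii) is missing.}

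Your architecture matches the paper's, and two of your steps are correct. Showing $\|q_n\|\to\infty$ by testing the superdifferential inequality at the normalizing point $a_n$, using $f_n(w_n)\to-\infty$, works. It is in fact cleaner than the paper's case distinction between $u_n$ and $w_n'$. Your part (ii) is the paper's argument, with the bound on $f_n^c$ replaced by testing at $a_n$.

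\textbf{The gap in part (i).} You claim that moving a distance $t$ against $\nabla h(u_n-p_n)$ lowers $h(\cdot-p_n)$ by an amount of order $t\|\nabla h(u_n-p_n)\|$. Convexity gives the opposite inequality: for $g\in\partial h(x)$ one only has $h(x-tg/\|g\|)\ge h(x)-t\|g\|$, so $t\|g\|$ is an \emph{upper} bound on the decrease. A lower bound needs control of subgradients at the displaced point, and a large $\|g\|$ does not provide it. For example, take a radial $h=\psi(\|\cdot\|)$ whose profile slope jumps from $a_k$ to $b_k\gg a_k$ at radii $R_k\to\infty$. Just outside $R_k$ the gradient has norm about $b_k$, yet a unit inward step lowers $h$ by only about $a_k$. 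Also, $h$ need not be differentiable, and you do not say how \ref{COne} would supply the bound.

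The missing idea, which the paper uses, is to move from $u_n$ straight toward $p_n$. Set $v_n=u_n-p_n$ and $z_n=u_n-\epsilon v_n/\|v_n\|$. Then $z_n-p_n=\lambda_n v_n$ with $\lambda_n=1-\epsilon/\|v_n\|$, and for $s_n\in\partial h(\lambda_n v_n)$,
\[
h(v_n)-h(\lambda_n v_n)\;\ge\;\epsilon\,\frac{\langle s_n,v_n\rangle}{\|v_n\|}\;\ge\;\epsilon\,\frac{h(\lambda_n v_n)-h(0)}{\|\lambda_n v_n\|}\;\longrightarrow\;\infty .
\]
This uses \ref{Coercive} alone. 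Hence $f_n(z_n)\le C+h(z_n-p_n)-h(u_n-p_n)\to-\infty$. (Your gradient direction can be rescued with \ref{COne}, by writing $x-tg/\|g\|$ as a convex combination of $0$ and a point of the cone at $x$, but that is extra work for no gain.)

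What you call the main obstacle then disappears. Take $\epsilon=\epsilon_n\downarrow 0$ slowly enough. Then use that $\partial^c f_n(z_n)\neq\emptyset$ because $z_n$ lies in the interior of the domain \cite[Theorem~3.3 and Proposition~C.4]{GangboMcCann.96}. No singleton points or semiconcavity estimates are needed.

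\textbf{The incomplete step in part (iii).} The idea is right, but you defer the quantitative step, which is exactly where \ref{COne} enters. You only need the one-sided fact that $K$ eventually lies in $\{x: h(x-q_n)\le h(w_n-q_n)\}$; Hausdorff convergence is not required. The paper gets this as follows:
\begin{itemize}
\item Let $r_n$ be the largest height such that $\mathrm{Cone}(r_n,\pi/(1+r_n^{-1}),v_n,w_n-q_n)\subset\{h\le h(w_n-q_n)\}$.
\item \ref{COne} together with $\|w_n-q_n\|\to\infty$ forces $r_n\to\infty$.
\item Along a subsequence $v_n\to v_*$. For compact $K\subset\{\langle v_*,x-u\rangle\ge\alpha\}$ with $\alpha>0$, one gets $\langle v_n,x-w_n\rangle\ge\alpha/2$ and $\|x-w_n\|\cos\bigl(\tfrac{\pi/2}{1+r_n^{-1}}\bigr)\le\langle v_n,x-w_n\rangle\le r_n$ for large $n$.
\item This means $K-q_n$ lies in that cone, so $K$ lies in the sublevel set.
\end{itemize}

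Your gradient normals fit this picture. If $h\le h(p)$ on a cone of angle $\theta$ at $p$, then $-g/\|g\|$ lies within angle $(\pi-\theta)/2$ of the axis for every $g\in\partial h(p)$. With that identification the correct choice is $v=\lim g_n/\|g_n\|$, not $\lim(-g_n/\|g_n\|)$. This gives the halfspace $\{\langle v,z-u\rangle<0\}$ in the statement.
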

 We observe that \cref{lemma:technical} has a clear geometric interpretation illustrated in \Cref{fig:intuition-lemma}.
In the next subsection we will take $f_{n}= f_{Q\to (1-\varepsilon)P+\varepsilon \nu_n }$ for some  probability measure $\nu_n$ and we will use  \cref{lemma:technical} to find a lower bound on the breakdown point. Indeed, for this choice of  $f_{n}$, we have that $\{\partial^c f_n\}_n$ does not escape to the horizon along subsequences as required by \cref{lemma:technical}. This fact is consequence of our second technical lemma. 
\begin{lemma}\label{lemma:do-not-scape}
Fix $\varepsilon\in (0,1)$.     Let $f_{n}= f_{Q\to (1-\varepsilon)P+\varepsilon \nu_n }$ for some sequence $\{\nu_n\}_n$ of probability measures. Then $\{\partial^c f_n\}_n$ does not escape to the horizon along any subsequence.  
\end{lemma}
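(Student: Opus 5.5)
The plan is to argue by contradiction, exploiting the fact that every $P_n:=(1-\varepsilon)P+\varepsilon\nu_n$ keeps a fixed fraction $1-\varepsilon$ of the uncontaminated mass $P$. Suppose that along some subsequence (not relabeled) $\partial^c f_n\to\emptyset$; that is, for every $R>0$ we have $\partial^c f_n\cap \mathbb{B}_R(0)=\emptyset$ in $\R^d\times\R^d$ for all $n$ large. Choose $R_1$ with $Q(\mathbb{B}_{R_1}(0))>1-(1-\varepsilon)/2$ and $R_2$ with $P(\mathbb{B}_{R_2}(0))>1-(1-\varepsilon)/2$. We will then show that the optimal maps $T_n:=T_{Q\to P_n}$ must send a set of $Q$-positive mass inside $\mathbb{B}_{R_1}(0)$ into $\mathbb{B}_{R_2}(0)$, for every $n$. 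This produces pairs in $\partial^c f_n$ lying in a fixed bounded set, contradicting escape to the horizon.

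The key step uses \Cref{Theorem-Gangbo-McCann}: $T_n\#Q=P_n$ and $T_n(x)\in\partial^c f_n(x)$ for $Q$-a.e.\ $x$. First, $Q(T_n^{-1}(\mathbb{B}_{R_2}(0)))=P_n(\mathbb{B}_{R_2}(0))\ge (1-\varepsilon)P(\mathbb{B}_{R_2}(0))$. Next, remove the set of $x$ outside $\mathbb{B}_{R_1}(0)$, which has $Q$-mass less than $(1-\varepsilon)/2$. Then
\[
Q\big(\{x\in \mathbb{B}_{R_1}(0): T_n(x)\in \mathbb{B}_{R_2}(0),\ T_n(x)\in\partial^c f_n(x)\}\big)\ \ge\ (1-\varepsilon)P(\mathbb{B}_{R_2}(0))-\tfrac{1-\varepsilon}{2}\ >\ 0 ,
\]
once the radii are tuned. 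Concretely, take $P(\mathbb{B}_{R_2}(0))>3/4$ and $Q(\mathbb{B}_{R_1}(0))>1-(1-\varepsilon)/4$; the lower bound is then at least $(1-\varepsilon)/2>0$. So for each $n$ there is a point $(x_n,T_n(x_n))\in\partial^c f_n$ with $\|x_n\|<R_1$ and $\|T_n(x_n)\|<R_2$. Hence $\partial^c f_n\cap\mathbb{B}_{R_1+R_2}(0)\neq\emptyset$ for all $n$, which contradicts escape along the subsequence.

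I do not expect a substantial obstacle; the argument is essentially a mass-counting one. The one delicate point is bookkeeping about which potential $f_n$ is meant. The statement refers to $f_{Q\to P_n}$, the $c$-concave potential provided by \Cref{Theorem-Gangbo-McCann}, and the inclusion $T_n(x)\in\partial^c f_n(x)$ holds only $Q$-a.e. This is why I intersect with the full-measure set where it holds; doing so is harmless because we only need one point of positive-measure sets. No cost assumptions beyond those used to invoke \Cref{Theorem-Gangbo-McCann} are needed, and the constants $R_1,R_2$ are independent of $n$ because they depend only on $Q$, $P$ and $\varepsilon$.
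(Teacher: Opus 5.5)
Your proposal is correct and follows essentially the same mass-counting argument as the paper. The paper uses compact sets $K,K'$ with $Q(K)\ge\varepsilon^{1/3}$ and $P(K')\ge(1-\varepsilon^{1/2})/(1-\varepsilon)$ in place of your balls. Your explicit intersection with the $Q$-full set where $T_n(x)\in\partial^c f_n(x)$ makes precise a point the paper leaves implicit. The contradiction framing is unnecessary, since you show $\partial^c f_n\cap\mathbb{B}_{R_1+R_2}(0)\neq\emptyset$ for every $n$ directly, but it is harmless.
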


\subsection{Continuous breakdown point}
In this section we prove \cref{theo:Breakdown}. We divide the proof in two  lemmas. 
\begin{lemma}[Lower bound] \label{lemma:lower-bound} Fix $Q \ll \mathcal{L}_d$. Then, 
$$ {\rm BP}({\bf T}_{Q\to P}(u), P) \geq   
{\rm TD}(u, Q). $$
\end{lemma}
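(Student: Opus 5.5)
We argue by contradiction. Suppose $\varepsilon<{\rm TD}(u;Q)$ and that the contaminated maps break down at $u$. Then there exist probability measures $\nu_n$, $c$-concave functions $f_n=f_{Q\to(1-\varepsilon)P+\varepsilon\nu_n}$ and points $p_n\in\partial^c f_n(u)$ with $\|p_n\|\to\infty$. We may assume ${\rm TD}(u;Q)>0$, since otherwise there is nothing to prove. Positive depth forces $u\in{\rm int}({\rm coh}\,{\rm supp}(Q))$, because every closed halfspace through $u$ carries positive $Q$-mass. Moreover, each $f_n$ may be taken finite on the interior of the convex hull of ${\rm supp}(Q)$; this is the standard property of Kantorovich potentials with respect to the source measure, since $\partial^c f_n$ is nonempty $Q$-a.e. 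So we can set $u_n\equiv u$, and the hypothesis $\{u_n\}\subset\subset{\rm int}(\bigcap_n{\rm dom}(f_n))$ of \cref{lemma:technical} holds. By \cref{lemma:do-not-scape}, $\{\partial^c f_n\}_n$ does not escape to the horizon along any subsequence. Hence \cref{lemma:technical} applies.

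By \cref{lemma:technical}(iii) there is $v\in\mathbb{S}^{d-1}$ such that, along a subsequence, $\partial^c f_n(K)\to\emptyset$ for every compact $K\subset H:=\{z:\langle v,z-u\rangle<0\}$. Now use the push-forward property. Since $T_n=\nabla^c f_n$ pushes $Q$ to $P_n:=(1-\varepsilon)P+\varepsilon\nu_n$, for any $R>0$ we have
\[
Q(K)\le Q\big(T_n^{-1}(\partial^c f_n(K))\big)=P_n\big(\partial^c f_n(K)\big)\le (1-\varepsilon)P\big(\mathbb{R}^d\setminus\mathbb{B}_R(0)\big)+\varepsilon
\]
for all $n$ large enough, because eventually $\partial^c f_n(K)\cap\mathbb{B}_R(0)=\emptyset$. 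Letting $n\to\infty$ and then $R\to\infty$ gives $Q(K)\le\varepsilon$. Taking compacts $K$ increasing to the open halfspace $H$ yields $Q(H)\le\varepsilon$.

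Since $Q\ll\mathcal{L}_d$, the hyperplane $\{\langle v,z-u\rangle=0\}$ is $Q$-null. Therefore $Q(\{\langle v,x-u\rangle\le0\})=Q(H)\le\varepsilon<{\rm TD}(u;Q)$, which contradicts the definition of Tukey depth. We conclude that every $\varepsilon<{\rm TD}(u;Q)$ is below the breakdown point, which is the lower bound.

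The main obstacle is the domain condition in \cref{lemma:technical}: we must ensure $u\in{\rm int}\bigcap_n{\rm dom}(f_n)$ uniformly in $n$. The plan is to fix the potentials on ${\rm int}({\rm coh}\,{\rm supp}(Q))$, where they are real-valued: $c$-concave functions are locally semiconcave, hence finite on the convex hull of points where they are finite, and they are finite $Q$-a.e. This covers $u$ whenever ${\rm TD}(u;Q)>0$. A secondary point is the set-valued extension: any choice of superdifferential agrees with $T_n$ $Q$-a.e., so the containment $T_n(K)\subset\partial^c f_n(K)$ used above is valid.
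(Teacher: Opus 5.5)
Your proposal is correct and is essentially the paper's proof: \cref{lemma:do-not-scape} and \cref{lemma:technical}~(iii) give a halfspace $\{\langle v,z-u\rangle<0\}$ whose compact subsets are eventually mapped outside every ball, and the push-forward identity then bounds its $Q$-mass by $\varepsilon<{\rm TD}(u;Q)$. Your handling of two points is, if anything, a bit more careful than the paper's: the uniform domain hypothesis, via the $n$-independent open set ${\rm int}({\rm coh}\,{\rm supp}(Q))$, and the open versus closed halfspace, via $Q$-null hyperplanes.

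One small correction concerns your claim that $c$-concave functions are ``finite on the convex hull of points where they are finite''. This does not follow from local semiconcavity, and it is not true in general. The right justification is \cite[Theorem~3.3 and Proposition~C.4]{GangboMcCann.96}, which gives ${\rm int}(S_n)\subset{\rm dom}(f_n)\subset S_n$ for a closed convex $S_n$. Since $Q({\rm dom}(\partial^c f_n))=1$ forces ${\rm supp}(Q)\subset S_n$, this yields exactly the inclusion ${\rm int}({\rm coh}\,{\rm supp}(Q))\subset{\rm dom}(f_n)$ that you use.
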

\begin{proof}
We only need consider ${\rm TD}(u;Q)>0$ since the result is trivial for ${\rm TD}(u;Q)=0$. Fix $u\in \R^d$ and $\varepsilon\in (0,1/2)$ with ${\rm TD}(u;Q)=\varepsilon+\alpha \in (0,1/2] $ for $\alpha>0$. Define $f_{n}= f_{Q\to (1-\varepsilon)P+\varepsilon \nu_n }$ for some sequence of probability measures $\{\nu_n\}_n$.  Note first that by \cite[Theorem~3.3 and Proposition~C.4]{GangboMcCann.96}, there exists a closed convex set $S$ such that
$$ 
{\rm int}(S)\subset {\rm dom}(f_n) \subset S \quad {\rm and}\quad {\rm int}(S)\subset {\rm dom}(\partial^c f_n) \subset S.
$$ We claim that $\partial^c f_n(u)\neq \emptyset $. To show the claim, we argue by contradiction. Assume that   $\partial^c f_n(u)= \emptyset $. Then the Hann-Banach theorem states that $\mathrm{int} (S)$ and $u$ are separated by a hyperplane $H$, which divides  $\R^d$ in two half-spaces: one closed $H_+$ containing $u$ and one open $H_-$ containing ${\rm int}(S)$. Since ${\rm TD}(u;Q)>0$, $Q$ gives some positive mass to both $H_+$ and $H_-$. However, since $\nabla^c f_n$  pushes $Q$ forward to a probability measure, there exists a dense subset of $S$  such that $Q(S)=1$. Since $Q \ll \mathcal{L}_d$, we also have $Q({\rm int}(S))=1$ and therefore   we contradict the fact that $Q(H_+)>0$. The claim follows.

Assume that  $\{f_{n}\}_n$
is such that $\|x_n\|\to \infty$ for some  $x_n\in \partial^c f_n(u) $. \cref{lemma:technical}~(iii) and \cref{lemma:do-not-scape} yield the existence of a subsequence $\{n_k\}_k$ and $v\in \mathbb{S}^{d-1}$ such that $\partial^c f_{n_k}(K) \to\emptyset$ for any  compact set $K\subset H_+= \{z\in\mathbb{R}^d: \langle v, z -u\rangle  \leq  0\} $. Since ${\rm TD}(u;Q)=\varepsilon+\alpha$,  it follows that $Q( H_+)\geq \varepsilon+\alpha$. 
Fix two compact sets $K_{\alpha,1}\subset H_+$ and $K_{\alpha,2}$ such that
$Q(K_{\alpha,1})\geq \varepsilon+\alpha/2  $
and 
$P(K_{\alpha,2})\geq 1-\alpha/3$.  As $\partial^c f_{n_k}(K_{\alpha,1})\cap K_{\alpha,2} =\emptyset $ for $k$ large enough,  we get the contradiction 
\begin{align*}
    \varepsilon+ \frac{\alpha}{2}&\leq Q(K_{\alpha,1}) \leq Q( (\partial^c f_n)^{-1} (\partial^c f_n(K_{\alpha,1})))=Q( (\nabla^c f_n)^{-1} (\partial^c f_n(K_{\alpha,1})))\\
    &=  ((1-\varepsilon)P+\varepsilon \nu_n)(\partial^c f_n(K_{\alpha,1}))\leq (1-\varepsilon)P(\mathbb{R}^d\setminus K_{\alpha,2})+\varepsilon \leq  \frac{\alpha}{3}  + \varepsilon .
\end{align*}
$$   $$
This proves that 
${\rm BP}({\bf T}_{Q\to P}(u), P) \geq   
{\rm TD}(u, Q). $
\end{proof}

The following result constructs  $f_{n} = f_{Q \to (1-\varepsilon)P + \varepsilon \nu_n}$ with a mass point distribution $\nu_n$ such that $\partial^c f_n(u)$ escapes to the horizon. This completes the proof of \cref{theo:Breakdown}. 
\begin{lemma}(Upper bound)\label{lemma-Upper}
Let $Q \ll \mathcal{L}_d$ be such that $Q({\rm bdry}({\rm supp}(Q)))=0$.
Fix $u\in \R^d$,  $\varepsilon\in (0,1)$ with ${\rm TD}(u;Q)=\varepsilon-\alpha$ for $\alpha>0$ and the contamination direction $\delta_{z_n}$, where 
$$z_n=u- \nabla h^* ( n\, v_u), \quad\quad v_u\in \argmin_{v\in \mathbb{S}^{d-1}} Q(\{ x\in\mathbb{R}^d: \langle v, x-u\rangle \leq 0\}).$$ 
    Then, letting    $f_{n}= f_{Q\to (1-\varepsilon)P+\varepsilon \delta_{z_n} }$, 
 $\{\partial^c f_n(u)\}_n$ escapes to the horizon. 
\end{lemma}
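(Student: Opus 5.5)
We argue by contradiction, using $c$-cyclical monotonicity of $\partial^c f_n$ together with a mass-balance argument. Write $P_n=(1-\varepsilon)P+\varepsilon\delta_{z_n}$ and $H=\{x:\langle v_u,x-u\rangle\le 0\}$, so that $Q(H)={\rm TD}(u;Q)=\varepsilon-\alpha<\varepsilon$. Suppose $\{\partial^c f_n(u)\}_n$ does not escape to the horizon. Passing to a subsequence, there are points $y_n\in\partial^c f_n(u)$ with $\|y_n\|\le R$ for all $n$. Since $T_n=\nabla^c f_n$ pushes $Q$ to $P_n$, which has an atom of mass $\varepsilon$ at $z_n$, the set $A_n=T_n^{-1}(\{z_n\})$ satisfies $Q(A_n)=\varepsilon$. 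On $A_n$ we have $(x,z_n)\in\partial^c f_n$ for $Q$-a.e.\ $x$. The goal is to show that, for $n$ large, $A_n$ must lie essentially inside $H$ (up to sets of vanishing $Q$-mass). This contradicts $Q(A_n)=\varepsilon>Q(H)+\alpha/2$.

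\textbf{Step 1 (localization via monotonicity).} Apply $c$-cyclical monotonicity (\Cref{theorem-c-cycl}) to the two pairs $(u,y_n)$ and $(x,z_n)$ with $x\in A_n$:
\[
h(u-y_n)+h(x-z_n)\le h(u-z_n)+h(x-y_n).
\]
Set $w_n=\nabla h^*(nv_u)$, so that $u-z_n=w_n$ and $\nabla h(w_n)=nv_u$. Also $\|w_n\|\to\infty$, by coercivity \ref{Coercive} and because $h^*$ is finite. Write $x-z_n=w_n+(x-u)$. By convexity of $h$,
\[
h(w_n+(x-u))-h(w_n)\ge n\langle v_u,x-u\rangle .
\]
Hence every $x\in A_n$ satisfies
\[
n\langle v_u,x-u\rangle\le h(x-y_n)-h(u-y_n).
\]
For $x$ in a fixed compact $K$ with $Q(K)\ge 1-\alpha/4$, and $\|y_n\|\le R$, the right-hand side is bounded by a constant $C_K$ independent of $n$. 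Therefore
\[
A_n\cap K\subset\{x:\langle v_u,x-u\rangle\le C_K/n\}.
\]

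\textbf{Step 2 (mass contradiction).} The slabs $\{0<\langle v_u,x-u\rangle\le C_K/n\}$ shrink to a hyperplane, which has $Q$-measure zero because $Q\ll\mathcal{L}_d$. Their $Q$-mass therefore tends to $0$. Combining with Step 1,
\[
\varepsilon=Q(A_n)\le Q(H)+Q\bigl(\{0<\langle v_u,x-u\rangle\le C_K/n\}\bigr)+\alpha/4\longrightarrow\varepsilon-\alpha+\alpha/4,
\]
which is a contradiction. Hence every selection from $\partial^c f_n(u)$ is eventually unbounded along every subsequence, i.e.\ $\{\partial^c f_n(u)\}_n$ escapes to the horizon.

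\textbf{Main obstacle: nonemptiness and boundedness of $\partial^c f_n(u)$.} If $\partial^c f_n(u)=\emptyset$ for infinitely many $n$, the statement is vacuous or trivially satisfied along those $n$, depending on convention. If it is nonempty but has bounded elements, we need the bound $\|y_n\|\le R$ to be usable, and it is: we only used a single pair $(u,y_n)$. What requires care is that the monotonicity inequality holds for $Q$-a.e.\ $x\in A_n$, and that $u$ lies in the relevant domain. The hypothesis $Q({\rm bdry}({\rm supp}(Q)))=0$ enters here: it ensures $u$ can be approximated by points of ${\rm int}({\rm dom}\,\partial^c f_n)$ when needed, and that the compact $K$ can be chosen inside the interior of the support. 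I would also double-check that the inequality $h(w+\xi)-h(w)\ge\langle\nabla h(w),\xi\rangle$ is valid. This requires $\nabla h$ to exist at $w_n$. If it does not, I would replace $\nabla h(w_n)$ by $nv_u\in\partial h(w_n)$, which holds by conjugate duality since $w_n\in\partial h^*(nv_u)$. This is exactly why $z_n$ is defined through $\nabla h^*$.
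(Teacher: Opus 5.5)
Your proof is correct and is essentially the paper's argument: the same $c$-monotonicity comparison between $(u,y_n)$ and pairs $(x,z_n)$, the same subgradient inequality coming from $n v_u\in\partial h(u-z_n)$, and the same mass count against $Q(H)=\varepsilon-\alpha$. The only difference is the order: the paper first fixes the margin region $\{\langle v_u,x-u\rangle>1/R\}\cap{\rm cl}(\mathbb{B}_R)$ and extracts one point $x_n$ there with $z_n\in\partial^c f_n(x_n)$, whereas you confine all of $A_n\cap K$ to a shrinking slab and count mass at the end.

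Three small corrections. In general only $Q(A_n)\geq\varepsilon$ holds, with equality unless $P$ charges $z_n$. The slabs decrease to the empty set, so no absolute continuity is needed at that step. The hypothesis $Q({\rm bdry}({\rm supp}(Q)))=0$ plays no role in your argument (the paper's proof does not invoke it either), so the speculation in your last paragraph is unnecessary.
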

\begin{proof}  
The monotone convergence theorem yields
$$ \lim_{R\to \infty} Q\left( \left\{ x\in\mathbb{R}^d: \langle v_u, x-u\rangle > \frac{1}{R}\right\} \cap  {\rm cl}(\mathbb{B}_{R})\right) = 1-  \varepsilon+\alpha ,$$
which guarantees the existence of $R>0$ such that
\begin{equation}
    \label{eq:upper:bound-R}
    Q\left( \left\{ x\in\mathbb{R}^d: \langle v_u, x-u\rangle > \frac{1}{R}\right\} \cap  {\rm cl}(\mathbb{B}_{R})\right)\geq  1- \varepsilon+\frac{\alpha }{2}.  
\end{equation}
Since $\partial^c f_{n}(x)= \{\nabla^c f_n(x)\}$ for $Q$-a.e.~$x\in \R^d$ and  $(\nabla^c f_{n})\# Q= (1-\varepsilon)P+\varepsilon\, \delta_{z_n}$, the set
$$(\partial^c f_{n})^{-1}(z_n)=  \{ x:  z_n\in \partial^c f_{n}(x) \}
$$
has $Q$-measure at least $ \varepsilon$. As a consequence, \eqref{eq:upper:bound-R} implies  that 
$$S_n=  \left\{ x\in\mathbb{R}^d: \langle v_u, x-u\rangle > \frac{1}{R}\right\} \cap  {\rm cl}(\mathbb{B}_{R})\cap  (\partial^c f_{n})^{-1}(z_n) $$ 
is nonempty for all $n\in \NN$, so that  we can find a bounded sequence $\{x_n\}_n $ such that 
$ \langle v_u, x_n-u\rangle > 1/R $ and $z_n\in \partial^c f_n(x_n)$. Now we show that $\|y_n\|\to \infty$ for every sequence $\{y_n\}_n$ with $y_n\in \partial^c f_n(u)$. We argue by contradiction assuming that a subsequence $\{ y_{n_k}\}_{k} $, with $y_{n_k}\in \partial^c f_{n_k}(u)$, is bounded. As $h$ is continuous and $\{ x_{n_k}\}_{k} $  is also bounded, there exists $C>0$ such that, for every $k\in \NN$, 
$$ | h(u- y_{n_k})-h(x_{n_k}- y_{n_k}) |\leq C. $$
Hence, since $\partial^c f_{n_k}$ is $c$-monotone (cf.~\Cref{theorem-c-cycl}) and $(x_{n_k}, z_{n_k}), (u, y_{n_k}) \in \partial^c f_{n_k}$,   for every $k\in \NN$ %
\begin{equation}
    \label{eq:bound-continous-BP-breaking}
-C \leq  h(u- y_{n_k})-h(x_{n_k}- y_{n_k}) \leq h(u- z_{n_k}) -h(x_{n_k}- z_{n_k}).
\end{equation}
By definition of $z_{n_k}$ it follows that $n_k v_u \in \partial h(u- z_{n_k})$, which yields
\begin{align}\label{eq:contradiciton}
 h(u- z_{n_k}) \leq h(x_{n_k}- z_{n_k}) +n_k  \langle v_u,  u-x_{n_k} \rangle  \leq  h(x_{n_k}- z_{n_k}) - \frac{n_k}{R}, 
\end{align}
where we 
used  $ \langle v_u, x_{n_k}-u\rangle > 1/R $ for all $k\in \NN$. %
Taking limits in \eqref{eq:contradiciton} as $k\to \infty$, we contradict \eqref{eq:bound-continous-BP-breaking}. The result follows. 

\end{proof}

\section{Final remarks}

We believe that the equivalence between the breakdown of transport maps and the Tukey depth in the reference measure suggests a natural definition of the Tukey depth in metric spaces. Indeed, for a metric space $(\mathcal{M}, d)$,  we  could define the Tukey depth of a point $u \in \mathcal{M}$ with respect to a probability measure $Q$ on $\mathcal{M}$ as the breakdown point of the Monge map at $u$, when transporting $Q$ to itself (i.e., $P = Q$) with the squared distance cost $c(x,y) = d^2(x,y)$.

Our main result
\Cref{theo:Breakdown} establishes that the choice of cost function does not affect the robustness properties of the Monge map. This implies that the breakdown point is determined solely by intrinsic geometric properties of $\mathbb{R}^d$, rather than by the specific distance function employed. %
It would therefore be interesting to investigate whether different metrics $d$ can lead to the same definition of Tukey depth for general spaces and to study whether such a notion of depth  agrees with the metric Tukey depth proposed in \cite{LopezTukeyMetric}.

\appendix

\section{Finite sample breakdown point}
\label{app:proof_finite_sample_BP}

In this section we derive \Cref{Theorem:Discrete}. First we provide a useful remark, which allows us to apply \cref{lemma:technical}.  
\begin{remark}
\label{remark:discrete-extension}
Recall that $\mathcal{P}_\ell(P_n)$ denotes the set of empirical measures sharing at least $ n-\ell$ atoms with $P_n$. Fix $\{\mu_{k}\}_k\subset \mathcal{P}_\ell(P_n)$ and note that for each $k\in \NN$, since the set 
 $(u_j , { T}_{Q_n\to \mu_{k}}(u_j)\}_{j=1}^n $ is $c$-cyclically monotone, it is contained in the $c$-superdifferential $\partial^c f_{k}$ of a $c$-concave function $ f_{k}$ by \cref{theorem-c-cycl}. Furthermore, the set
$$\{(u_j,{ T}_{Q_n\to \mu_k}(u_j)) :   { T}_{Q_n\to \mu_k}(u_j) \in {\rm supp}(\mu_k) \cap {\rm supp}(P_n) \}$$
is nonempty (for $n-1\geq \ell\geq 1$) and bounded, so the sequence of sets $\{\partial^c f_{k}\}_k$ does not escape to the horizon along any subsequence. 

\end{remark}
The following result shows the lower bound. 
\begin{lemma}[Finite sample lower bound] \label{lem:finite_sample_lower}Under the conditions of \Cref{Theorem:Discrete}, we have  that  
$$ {\rm BP}({ T}_{Q_n\to P_n}(u_i), P_n) \geq   
{\rm TD}^-(u_i, Q_n)+\frac{1}{n}. $$
\end{lemma}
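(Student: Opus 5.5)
We argue by contradiction, mirroring the proof of \cref{lemma:lower-bound}. Write $\tau = {\rm TD}^-(u_i;Q_n)$, so $n\tau$ is an integer, and let $\ell \le n\tau$. Suppose there is a sequence $\{\mu_k\}_k \subset \mathcal{P}_\ell(P_n)$ with $\|T_{Q_n\to \mu_k}(u_i)\|\to\infty$. Following \cref{remark:discrete-extension}, we place the cyclically monotone set $\{(u_j, T_{Q_n\to\mu_k}(u_j))\}_{j=1}^n$ inside $\partial^c f_k$ for some $c$-concave $f_k$. The remark also shows that $\{\partial^c f_k\}_k$ does not escape to the horizon along any subsequence: the at least $n-\ell\ge 1$ pairs whose images are atoms of $P_n$ stay bounded. (When $\tau=0$ the claim is trivial, since $\ell\ge 1$ always.)

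To apply \cref{lemma:technical} we need $u_i$ in the interior of $\bigcap_k {\rm dom}(f_k)$, uniformly. We secure this by choosing the $f_k$ explicitly. Take the Rockafellar/Smith--Knott construction
\[
f_k(x)=\min_j\{c(x,y^k_j)-t^k_j\},
\]
with finitely many atoms $y^k_j$. This function is finite everywhere because $h$ is real valued, so ${\rm dom}(f_k)=\R^d$ and the hypothesis on $\{u_n\}$ holds with the constant sequence $u_i$. \cref{lemma:technical}(iii) then gives $v\in\mathbb{S}^{d-1}$ and a subsequence along which $\partial^c f_k(K)\to\emptyset$ for every compact $K\subset\{z:\langle v,z-u_i\rangle<0\}$.

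Now apply this to $K=\{u_j:\langle v,u_j-u_i\rangle<0\}$, a finite set and hence compact. By the definition of ${\rm TD}^-$, $K$ contains at least $n\tau\ge \ell$ points. I also want to include $u_i$ itself. Since $\|T_{Q_n\to\mu_k}(u_i)\|\to\infty$, the image of $u_i$ is eventually a contaminating atom, so at least $\ell+1$ of the points $u_j$ are eventually mapped to arbitrarily far points. These images are distinct atoms of $\mu_k$ because $T_{Q_n\to\mu_k}$ is a bijection onto the atoms. Only $\ell$ atoms of $\mu_k$ can lie outside the bounded set $x^{(n)}$, so we get the contradiction $\ell+1\le\ell$. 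Hence no $\ell\le n\tau$ produces breakdown, and ${\rm BP}\ge \tau+1/n$.

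The delicate step is the use of \cref{lemma:technical}: in part (iii) the half-space is open and the subsequence depends on $v$. I have to check that the finite-sample sets $\partial^c f_k(K)$ with $K$ a finite subset of the open half-space genuinely escape to the horizon. In particular, I need the images $T_{Q_n\to\mu_k}(u_j)$ for those $u_j$ to lie in $\partial^c f_k(u_j)$, which holds by construction. A secondary issue is that the $f_k$ from \cref{theorem-c-cycl} might have a restricted domain. I avoid this by using the explicit finite-min formula above, which is finite everywhere under \ref{Stricl-convex}.
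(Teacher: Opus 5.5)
Your proof is correct and is essentially the paper's argument. You apply \cref{lemma:technical}~(iii) with the constant sequence $u_i$ to get an open half-space whose sample points, together with $u_i$, are eventually all sent far away and hence to contaminating atoms, and then you count them against the at most $\ell$ replaced atoms. The only deviation is how you secure the domain hypothesis of \cref{lemma:technical}: the paper uses ${\rm TD}^-(u_i;Q_n)>0$, a separation argument and the Gangbo--McCann convex set $S$ to place $u_i$ in ${\rm int}({\rm dom}(f_k))$, whereas you take the finite-min Rockafellar potential $f_k(x)=\min_j\{c(x,y^k_j)-t^k_j\}$, which is finite on all of $\R^d$ because $h$ is real valued --- a valid and slightly cleaner shortcut.
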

\begin{proof}
Fix $ i, \ell\in \{1, \dots, n\}$ and a sequence $  \{\nu_{k}\}_k\subset \mathcal{P}_\ell( P_n) $ such that  $\|{ T}_{Q_n\to \nu_k}(u_i)\|\to \infty$. We will show that $\ell\geq n\,{\rm TD}^-(u_i, Q_n)+1 $. 
Assume that ${\rm TD}^-(u_i, Q_n)>0$ as otherwise the result is trivial. 
 \Cref{remark:discrete-extension} shows that
$(u_j , { T}_{Q_n\to \nu_{k}}(u_j)\}_{j=1}^n $ is contained in the $c$-superdifferential $\partial^c f_{k}$ of a $c$-concave function $f_{k}$, for all $k\in \NN$. Since ${\rm TD}^-(u_i, Q_n)>0$, the separation theorem (see e.g.,~\cite[Theorem~11.2]{Rockafellar.70}) and $u^{(n)}\subset {\rm dom}(\partial^c f_k)$ yield 
$$ u_i \in {\rm int}({\rm coh}(u^{(n)})) \subset {\rm int}({\rm coh}({\rm dom}(\partial^c f_k) )), \quad \text{for all }k\in \NN.  $$
As in the proof of \Cref{lemma:lower-bound}, by \cite[Theorem~3.3 and Proposition~C.4]{GangboMcCann.96}, there exists a closed convex set $S$ such that
$$ 
{\rm int}(S)\subset {\rm dom}(f_k) \subset S \quad {\rm and}\quad {\rm int}(S)\subset {\rm dom}(\partial^c f_k) \subset S, \quad \text{for all }k\in \NN, 
$$ 
which, by \cite[Theorem~6.3]{Rockafellar.70}, implies that 
$$ u_i \in {\rm int}({\rm coh}(u^{(n)})) \subset {\rm int}(S)\subset {\rm dom}(\partial^c f_k)  . $$
This and \cref{remark:discrete-extension} allows us to apply  \cref{lemma:technical}. 
In particular, by \cref{lemma:technical}~(iii) there exists $v\in \mathbb{S}^{d-1}$ such that $\|{ T}_{Q_n\to \nu_k}(u_j)\| \to \infty$ along a subsequence for all  $u_j\in \{z\in\mathbb{R}^d: \langle  v, z -u_i\rangle<0 \}$. For simplicity, we use the slight abuse of notation $\nu_k$ for the subsequence. 
Assume that  $ {\rm supp}(\nu_k) \cap {\rm supp}(P_n)=\{x_1, \dots, x_{n-\ell}\}$ and call $\{z_{1,k}, \dots, z_{\ell,k}\}$ the remaining elements of  ${\rm supp}(\nu_k)$.  Since ${ T}_{Q_n\to \nu_k}$ is a bijection, it must follow that $\{z_{1,k}, \dots, z_{\ell,k}\}$ escapes to the horizon and that for $k$ large enough
$$ { T}_{Q_n\to \nu_k} (\{u_j: \langle v, u_j -u_i\rangle<0\} \cup \{u_i\}) \subset \{z_{1,k}, \dots, z_{\ell,k}\}. $$
We integrate with respect to $\nu_k$ to get 
\begin{align*}
{\rm TD}^-(u_i, Q_n)+\frac{1}{n}&=  \inf_{v'\in \mathbb{S}^{d-1}} Q_n(\{ x: \langle v', x-u_i\rangle < 0\})+\frac{1}{n}\\
&\leq Q_n(\{ x: \langle v, x-u_i\rangle < 0\})+\frac{1}{n}\\
&=\nu_k( { T}_{Q_n\to \nu_k} (\{u_j: \langle v, u_j -u_i\rangle<0\} \cup \{u_i\}) ) \leq \frac{\ell}{n} ,  
\end{align*}
which concludes the proof.
\end{proof}

Now we proceed with the upper bound. We start with the following fundamental lemma.

\begin{lemma}\label{lemma:discrete-upper}
    Fix $j\in \{1, \dots, n\}$, and $v\in \mathbb{S}^{d-1}$.  
Relabel  the reference sample $u^{(n)}=\{u_1, \dots, u_{n}\}= \{ u_{(1)}, \dots, u_{(n)}\}$ in such a way that for some $\ell\in \{1, \dots, n\}$,  $u_{(\ell)}= u_j$, 
$$ \{u_i\}_{i=1}^{\ell}= \{ x\in \mathbb{R}^d: \langle v, x-u_j\rangle \leq  0\} \cap u^{(n)}. $$
Fix $z_0\in \R^d$. For each $k\in \NN$ and $m\in \NN$, 
    define the measure 
    \begin{equation}\label{eq:def-nu-zeta}
        \nu_{k,v,m}=\frac{m}{n} \delta_{ z_{k,v,m}} + \frac{1}{n} \sum_{i=m+1}^n \delta_{T_{Q_n\to P_n}(u_{(i)})} , \quad \text{where}\quad z_{k,v,m} = u_{(\ell)} - \nabla h^{*}(z_0+k\, v). 
    \end{equation} Then the following hold: 

\begin{enumerate}
    \item  If   $ T_{Q_n\to \nu_{k,v,m}}(u_{(r)})=z_{k,v,m} $ for $k\in \NN$ large enough and some $r\geq \ell+1$, then
    $$ \text{$\| T_{Q_n\to \nu_{k,v,m}}(u_{j} )\|\to \infty  $ as $k\to \infty$.}$$
     
    \item If $m\geq \ell$, then $\| T_{Q_n\to \nu_{k,v,m}}(u_{j} )\|\to \infty  ,$ as $k\to \infty$. 
    \item ${\rm BP}({ T}_{Q_n\to P_n}(u_j), P_n) \leq  \frac{\ell}{n}$.
\end{enumerate}

\end{lemma}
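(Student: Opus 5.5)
The plan is to reuse the mechanism of \Cref{lemma-Upper}. The atom $z_{k,v,m}$ is built so that its associated subgradient points in the direction $v$: since $z_{k,v,m}=u_j-\nabla h^*(z_0+kv)$, we have $z_0+kv\in\partial h(u_j-z_{k,v,m})$. Two preliminary facts will be used throughout. First, $\|z_{k,v,m}\|\to\infty$. Indeed, $h$ is finite and convex, so $\partial h$ maps bounded sets to bounded sets. If $\nabla h^*(z_0+kv)$ stayed bounded along a subsequence, then the subgradients $z_0+kv\in\partial h(\nabla h^*(z_0+kv))$ would also stay bounded, which is absurd. Second, apart from $z_{k,v,m}$, the measure $\nu_{k,v,m}$ has only the finitely many fixed atoms $T_{Q_n\to P_n}(u_{(i)})$, $i>m$. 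Hence $T_{Q_n\to\nu_{k,v,m}}(u_j)$ either equals $z_{k,v,m}$ or lies in a fixed finite set $B$.

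For (i), I argue by contradiction. Suppose that along a subsequence $y_k:=T_{Q_n\to\nu_{k,v,m}}(u_j)$ stays bounded; by the second fact, then $y_k\in B$. Put $\delta:=\langle v,u_{(r)}-u_j\rangle$, which is positive because $r\ge\ell+1$ places $u_{(r)}$ in the open half-space. The pairs $(u_j,y_k)$ and $(u_{(r)},z_{k,v,m})$ both belong to the $c$-cyclically monotone set \eqref{c-monotone-discrete}, so the two-point swap inequality gives
\[
h(u_j-y_k)-h(u_{(r)}-y_k)\le h(u_j-z_{k,v,m})-h(u_{(r)}-z_{k,v,m}).
\]
The left-hand side is bounded below uniformly in $k$, since $y_k\in B$ and $h$ is continuous. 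On the right-hand side, the subgradient inequality at $u_j-z_{k,v,m}$ gives
\[
h(u_{(r)}-z_{k,v,m})\ge h(u_j-z_{k,v,m})+\langle z_0,u_{(r)}-u_j\rangle+k\delta,
\]
so the right-hand side is at most $-\langle z_0,u_{(r)}-u_j\rangle-k\delta\to-\infty$. This is a contradiction. Therefore $T_{Q_n\to\nu_{k,v,m}}(u_j)=z_{k,v,m}$ eventually, and its norm diverges by the first fact.

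For (ii), I use pigeonhole. The atom $z_{k,v,m}$ carries mass $m/n$, so exactly $m$ points of $u^{(n)}$ are sent to it, using that the discrete optimal map is a bijection onto the atoms counted with multiplicity. Take any subsequence along which $u_j$ is \emph{not} sent to $z_{k,v,m}$. Of the other points in the closed half-space, $u_{(1)},\dots,u_{(\ell-1)}$, at most $\ell-1<m$ can be sent to $z_{k,v,m}$. Hence some $u_{(r)}$ with $r\ge\ell+1$ is sent there, and, since there are finitely many indices, the same $r$ works along a further subsequence. The computation of (i) then applies along that subsequence and yields a contradiction. So $u_j$ is eventually sent to $z_{k,v,m}$, whose norm diverges.

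For (iii), take $m=\ell$. For large $k$, $\nu_{k,v,\ell}$ is an empirical measure sharing $n-\ell$ atoms with $P_n$ (there is a minor distinctness issue, harmless for large $k$ since $z_{k,v,\ell}$ escapes). Thus $\nu_{k,v,\ell}\in\mathcal P_\ell(P_n)$, and by (ii) the supremum in the definition is infinite, so ${\rm BP}\le\ell/n$.

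The main obstacle is (i). One has to recognize that boundedness of $T_{Q_n\to\nu_{k,v,m}}(u_j)$ forces it into the fixed finite set $B$, so that the left-hand side of the swap inequality is uniformly controlled. The sign bookkeeping in the subgradient inequality, which produces the $-k\delta$ drift, is where care is needed.
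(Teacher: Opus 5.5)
Your proof is correct and follows essentially the same route as the paper: the same two-point $c$-cyclical monotonicity swap between $(u_j,T(u_j))$ and $(u_{(r)},z_{k,v,m})$, combined with the subgradient inequality at $u_j-z_{k,v,m}$, for (i); the same count of the $m$ points sent to $z_{k,v,m}$, together with local boundedness of $\partial h$ to get $\|z_{k,v,m}\|\to\infty$, for (ii); and the choice $m=\ell$ for (iii). Your explicit subsequence extraction in (ii) is a slightly cleaner treatment of the dichotomy that the paper asserts only ``for $k$ large enough'', which could otherwise alternate in $k$.
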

\begin{proof}
   We use the simplified notation  $T_{k}=T_{Q_n\to \nu_{k,v,m}}$ and $T=T_{Q_n\to P_n}$. Assume that $ T_k(u_{(r)})= z_k:=z_{k,v,m} $ for $k$ large enough and some $r\geq \ell+1$. Then $\lambda= \langle v, u_{(r)}-u_{(\ell)}\rangle>0$. We show (i) {\it ad absurdum} and assume that $\{\|T_k(u_{(\ell)})\|\}_k$ is bounded.  Then there exists $C>0$ such that 
$ h( u_{(r)}-T_{k}(u_{(\ell)}))  \leq C  $ for all $k\in \NN$, so that by the $c$-cyclical monotonicity of $T_k$,  it follows that 
\begin{align*}
    -C&\leq    h( u_{(\ell)}-T_{k}(u_{(\ell)}))-h( u_{(r)}-T_{k}(u_{(\ell)}))\\
    &\leq  h( u_{(\ell)}-T_{k}(u_{(r)}))-h( u_{(r)}-T_{k}(u_{(r)})) =  h( u_{(\ell)}-z_k)-h( u_{(r)}-z_k) .
\end{align*}
By the convexity of $h$ and definition of $z_{k,v,m}$ (cf.~\eqref{eq:def-nu-zeta}), we have 
\begin{align*}
     h( u_{(\ell)}-z_k)-h( u_{(r)}-z_k)  &\leq  \langle z_0+  k\,v ,u_{(\ell)}-u_{(r)}\rangle = \langle z_0 ,u_{(\ell)}-u_{(r)}\rangle  -k \lambda \to -\infty,
\end{align*}
which gives a contradiction and  proves (i) because we are also assuming that $u_j=u_{(\ell)}$.  To obtain (ii), we observe that if $m\geq \ell $, then $T_{k}$ maps exactly $m$ points of $u^{(n)}$ to $z_k$. Hence, for $k$ large enough, we have $T_{k}(u_{(r)}) =z_k   $ for all $r\leq \ell$  or $T_{k}(u_{(s)}) =z_k   $   for some $ s>\ell$. If $T_{k}(u_{(s)}) =z_k   $   for some $ s>\ell$,  part~(i) implies that   $\|T_{k}(u_{(\ell)})\|\to \infty   $. Hence, to  conclude (ii), we need to show that $\|z_k\|\to \infty$. Assume that   $\{z_k\}_k$ admits a bounded  subsequence. Then, by \cite[Proposition~16.20]{Bauschke.2017.convex},  
 $\{\nabla h^{*}(z_0+k\, v)\}_k$ admits a bounded subsequence, hence 
  $z_0+kv \in \partial h (\nabla h^{*}(z_0+k\, v))$ admits  a bounded subsequence, which is a contradiction. {\it A fortiori}, $\|z_k\|\to \infty$ and (ii) follows. 

Part (iii) follows by noticing that $ \nu_{k,v,\ell}$ shares $n-\ell$ atoms with $ P_n$. 
\end{proof}
Now we can prove the upper bounds in \Cref{Theorem:Discrete}
as a consequence of \Cref{lemma:discrete-upper}. 
\begin{lemma}[Finite sample upper bound]\label{lem:finite_sample_UB}
For every $j=1, \dots, n$, 
\begin{equation}
    \label{eq:discrete-BP-proof}
     {\rm BP}({ T}_{Q_n\to P_n}(u_j), P_n) \leq  {\rm TD}(u_j;Q_n) .
\end{equation}
If, furthermore, $u^{(n)}=\{u_1, \dots, u_n\}$ is in general position, then 
\begin{equation}
    \label{eq:General-Position-proof}
    {\rm BP}({ T}_{Q_n\to P_n}(u_j), P_n) \leq  
{\rm TD}^{-}(u_j;Q_n)+\frac{1}{n}.
\end{equation}
\end{lemma}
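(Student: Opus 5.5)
The plan is to derive both bounds from \Cref{lemma:discrete-upper}~(iii), which gives ${\rm BP}\le \ell/n$ for every direction $v\in\mathbb{S}^{d-1}$, where $\ell=\ell(v)$ is the number of sample points in the closed halfspace $\{x:\langle v,x-u_j\rangle\le 0\}$. Taking the infimum over $v$ then bounds the breakdown point by $\inf_v Q_n(\{x:\langle v,x-u_j\rangle\le0\})$.

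For \eqref{eq:discrete-BP-proof}, the relabelling in \Cref{lemma:discrete-upper} has to respect the ordering with $u_{(\ell)}=u_j$. Since $u_j$ itself lies in the closed halfspace, I would label the other points of the halfspace as $u_{(1)},\dots,u_{(\ell-1)}$ and set $u_{(\ell)}=u_j$. Then \Cref{lemma:discrete-upper}~(iii) yields ${\rm BP}\le Q_n(\{x:\langle v,x-u_j\rangle\le0\})$. Because $Q_n$ takes values in $\frac1n\NN$, the infimum over $v$ is attained, so it equals ${\rm TD}(u_j;Q_n)$ and the bound follows.

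For \eqref{eq:General-Position-proof} I need a direction $v$ whose closed halfspace contains only the points of the open halfspace $\{\langle v,x-u_j\rangle<0\}$ together with $u_j$ itself. Let $v^*$ attain ${\rm TD}^-(u_j;Q_n)$, which is again attained by discreteness. The points on the hyperplane $\{\langle v^*,x-u_j\rangle=0\}$ other than $u_j$ form a finite set $F$, and by general position $F\cup\{u_j\}$ has at most $d-1$ points. I would perturb $v^*$ to some $v$ close to it. Smallness of the perturbation keeps every point with $\langle v^*,x-u_j\rangle>0$ strictly positive and every point with $\langle v^*,x-u_j\rangle<0$ strictly negative. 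It remains to make $\langle v, x-u_j\rangle>0$ for all $x\in F$. The vectors $x-u_j$, $x\in F$, number at most $d-2$, lie in $(v^*)^\perp$, and are linearly independent by general position. Hence there exists $w\in (v^*)^\perp$ with $\langle w,x-u_j\rangle=1$ for all $x\in F$, and the choice $v=(v^*+\eta w)/\|v^*+\eta w\|$ with small $\eta>0$ works.

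With this $v$, the closed halfspace contains exactly the points of the open $v^*$-halfspace plus $u_j$. So $\ell=n{\rm TD}^-(u_j;Q_n)+1$, and \Cref{lemma:discrete-upper}~(iii) gives the claim. The main obstacle is this general-position perturbation step. One needs the linear independence of $\{x-u_j\}_{x\in F}$ inside the hyperplane, which follows because $F\cup\{u_j\}$ is affinely independent: otherwise a hyperplane would contain $d$ of the points. The requirement $n\ge d$ is only needed to make general position meaningful.
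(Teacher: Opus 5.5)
Your proposal is correct and follows the paper's proof essentially step for step. The first bound comes from \Cref{lemma:discrete-upper}~(iii) applied to a direction minimizing the closed-halfspace mass. The second comes from tilting a minimizer $v^*$ of the open-halfspace mass by a vector in $(v^*)^\perp$ that is strictly positive on the other sample points of the hyperplane. Your $w$ plays the role of the paper's $v_0$, and your $v^*+\eta w$ that of $w_{\beta_*}$.

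One small remark: your count of at most $d-2$ vectors follows the paper's literal definition of general position. Under the standard convention that no $d+1$ points lie on a hyperplane, you get at most $d-1$ linearly independent vectors in the $(d-1)$-dimensional space $(v^*)^\perp$, so the linear system is still solvable and the argument is unchanged.
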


\begin{proof}
In order to prove \eqref{eq:discrete-BP-proof}, we apply \Cref{lemma:discrete-upper}~(iii) to the vector 
$$v_*\in \argmin_{v\in \mathbb{S}^{d-1}} Q_n (\{ x: \langle v, x-u_j\rangle \leq  0\}).$$
and  $\ell=\sum_{i=1}^n {\bf 1}_{\langle v_*, u_i-u_j\rangle \leq  0}$. Since $\ell/n={\rm TD}(u_j; Q_n)$, we have found the upper bound \eqref{eq:discrete-BP-proof}. 

 To prove \eqref{eq:General-Position-proof}, define 
 $$w\in \argmin_{v\in \mathbb{S}^{d-1}} Q_n (\{ x: \langle v, x-u_j\rangle <  0\}).$$
 We will apply \cref{lemma:discrete-upper} to a  continuous perturbation of  $w$ defining a hyperplane containing exactly one more point than $\{ x: \langle w, x-u_j\rangle <  0\}$. The intuition for this is simple: the general position assumption implies that no more than 2 points can lie in the same line. 
 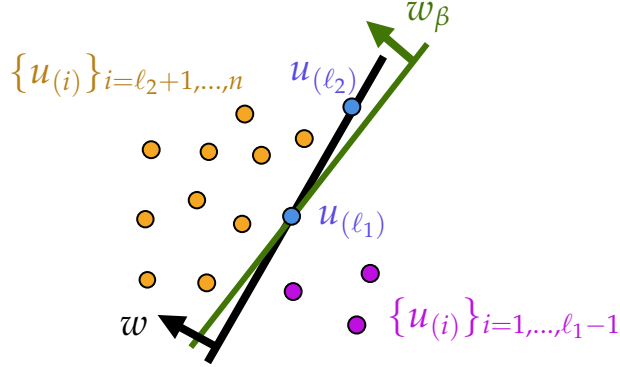
\begin{figure}[h!]
    \centering

\tikzset{every picture/.style={line width=0.75pt}} %

\begin{tikzpicture}[x=0.6pt,y=0.6pt,yscale=-1,xscale=1]
\draw [color={rgb, 255:red, 0; green, 0; blue, 0 }  ,draw opacity=1 ][line width=3]    (375.83,42.08) -- (264.83,234.92) ;
\draw  [fill={rgb, 255:red, 245; green, 166; blue, 35 }  ,fill opacity=1 ] (259,184.83) .. controls (259,181.89) and (261.39,179.5) .. (264.33,179.5) .. controls (267.28,179.5) and (269.67,181.89) .. (269.67,184.83) .. controls (269.67,187.78) and (267.28,190.17) .. (264.33,190.17) .. controls (261.39,190.17) and (259,187.78) .. (259,184.83) -- cycle ;
\draw  [fill={rgb, 255:red, 74; green, 144; blue, 226 }  ,fill opacity=1 ] (350.5,73.83) .. controls (350.5,70.89) and (352.89,68.5) .. (355.83,68.5) .. controls (358.78,68.5) and (361.17,70.89) .. (361.17,73.83) .. controls (361.17,76.78) and (358.78,79.17) .. (355.83,79.17) .. controls (352.89,79.17) and (350.5,76.78) .. (350.5,73.83) -- cycle ;
\draw  [fill={rgb, 255:red, 189; green, 16; blue, 224 }  ,fill opacity=1 ] (313.5,190.33) .. controls (313.5,187.39) and (315.89,185) .. (318.83,185) .. controls (321.78,185) and (324.17,187.39) .. (324.17,190.33) .. controls (324.17,193.28) and (321.78,195.67) .. (318.83,195.67) .. controls (315.89,195.67) and (313.5,193.28) .. (313.5,190.33) -- cycle ;
\draw  [fill={rgb, 255:red, 245; green, 166; blue, 35 }  ,fill opacity=1 ] (293.5,104.33) .. controls (293.5,101.39) and (295.89,99) .. (298.83,99) .. controls (301.78,99) and (304.17,101.39) .. (304.17,104.33) .. controls (304.17,107.28) and (301.78,109.67) .. (298.83,109.67) .. controls (295.89,109.67) and (293.5,107.28) .. (293.5,104.33) -- cycle ;
\draw [color={rgb, 255:red, 65; green, 117; blue, 5 }  ,draw opacity=1 ][line width=2.25]    (403.33,34.08) -- (254.33,225.42) ;
\draw  [fill={rgb, 255:red, 74; green, 144; blue, 226 }  ,fill opacity=1 ] (312.5,142.83) .. controls (312.5,139.89) and (314.89,137.5) .. (317.83,137.5) .. controls (320.78,137.5) and (323.17,139.89) .. (323.17,142.83) .. controls (323.17,145.78) and (320.78,148.17) .. (317.83,148.17) .. controls (314.89,148.17) and (312.5,145.78) .. (312.5,142.83) -- cycle ;
\draw [line width=3]    (238.61,207.95) -- (270,225) ;
\draw [shift={(233.33,205.08)}, rotate = 28.51] [fill={rgb, 255:red, 0; green, 0; blue, 0 }  ][line width=0.08]  [draw opacity=0] (16.97,-8.15) -- (0,0) -- (16.97,8.15) -- cycle    ;
\draw [color={rgb, 255:red, 65; green, 117; blue, 5 }  ,draw opacity=1 ][line width=3]    (369.38,23.49) -- (394.5,45.08) ;
\draw [shift={(364.83,19.58)}, rotate = 40.68] [fill={rgb, 255:red, 65; green, 117; blue, 5 }  ,fill opacity=1 ][line width=0.08]  [draw opacity=0] (16.97,-8.15) -- (0,0) -- (16.97,8.15) -- cycle    ;
\draw  [fill={rgb, 255:red, 245; green, 166; blue, 35 }  ,fill opacity=1 ] (224,100.83) .. controls (224,97.89) and (226.39,95.5) .. (229.33,95.5) .. controls (232.28,95.5) and (234.67,97.89) .. (234.67,100.83) .. controls (234.67,103.78) and (232.28,106.17) .. (229.33,106.17) .. controls (226.39,106.17) and (224,103.78) .. (224,100.83) -- cycle ;
\draw  [fill={rgb, 255:red, 245; green, 166; blue, 35 }  ,fill opacity=1 ] (283,78.33) .. controls (283,75.39) and (285.39,73) .. (288.33,73) .. controls (291.28,73) and (293.67,75.39) .. (293.67,78.33) .. controls (293.67,81.28) and (291.28,83.67) .. (288.33,83.67) .. controls (285.39,83.67) and (283,81.28) .. (283,78.33) -- cycle ;
\draw  [fill={rgb, 255:red, 245; green, 166; blue, 35 }  ,fill opacity=1 ] (253,132.67) .. controls (253,129.81) and (255.31,127.5) .. (258.17,127.5) .. controls (261.02,127.5) and (263.33,129.81) .. (263.33,132.67) .. controls (263.33,135.52) and (261.02,137.83) .. (258.17,137.83) .. controls (255.31,137.83) and (253,135.52) .. (253,132.67) -- cycle ;
\draw  [fill={rgb, 255:red, 245; green, 166; blue, 35 }  ,fill opacity=1 ] (281.5,147.67) .. controls (281.5,144.81) and (283.81,142.5) .. (286.67,142.5) .. controls (289.52,142.5) and (291.83,144.81) .. (291.83,147.67) .. controls (291.83,150.52) and (289.52,152.83) .. (286.67,152.83) .. controls (283.81,152.83) and (281.5,150.52) .. (281.5,147.67) -- cycle ;
\draw  [fill={rgb, 255:red, 245; green, 166; blue, 35 }  ,fill opacity=1 ] (220.5,144.67) .. controls (220.5,141.81) and (222.81,139.5) .. (225.67,139.5) .. controls (228.52,139.5) and (230.83,141.81) .. (230.83,144.67) .. controls (230.83,147.52) and (228.52,149.83) .. (225.67,149.83) .. controls (222.81,149.83) and (220.5,147.52) .. (220.5,144.67) -- cycle ;
\draw  [fill={rgb, 255:red, 245; green, 166; blue, 35 }  ,fill opacity=1 ] (260.5,102.17) .. controls (260.5,99.31) and (262.81,97) .. (265.67,97) .. controls (268.52,97) and (270.83,99.31) .. (270.83,102.17) .. controls (270.83,105.02) and (268.52,107.33) .. (265.67,107.33) .. controls (262.81,107.33) and (260.5,105.02) .. (260.5,102.17) -- cycle ;
\draw  [fill={rgb, 255:red, 245; green, 166; blue, 35 }  ,fill opacity=1 ] (320.5,93.83) .. controls (320.5,90.89) and (322.89,88.5) .. (325.83,88.5) .. controls (328.78,88.5) and (331.17,90.89) .. (331.17,93.83) .. controls (331.17,96.78) and (328.78,99.17) .. (325.83,99.17) .. controls (322.89,99.17) and (320.5,96.78) .. (320.5,93.83) -- cycle ;
\draw  [fill={rgb, 255:red, 189; green, 16; blue, 224 }  ,fill opacity=1 ] (362,178.92) .. controls (362,175.93) and (364.43,173.5) .. (367.42,173.5) .. controls (370.41,173.5) and (372.83,175.93) .. (372.83,178.92) .. controls (372.83,181.91) and (370.41,184.33) .. (367.42,184.33) .. controls (364.43,184.33) and (362,181.91) .. (362,178.92) -- cycle ;
\draw  [fill={rgb, 255:red, 189; green, 16; blue, 224 }  ,fill opacity=1 ] (353.5,211.42) .. controls (353.5,208.43) and (355.93,206) .. (358.92,206) .. controls (361.91,206) and (364.33,208.43) .. (364.33,211.42) .. controls (364.33,214.41) and (361.91,216.83) .. (358.92,216.83) .. controls (355.93,216.83) and (353.5,214.41) .. (353.5,211.42) -- cycle ;
\draw  [fill={rgb, 255:red, 245; green, 166; blue, 35 }  ,fill opacity=1 ] (222,182.92) .. controls (222,180.2) and (224.2,178) .. (226.92,178) .. controls (229.63,178) and (231.83,180.2) .. (231.83,182.92) .. controls (231.83,185.63) and (229.63,187.83) .. (226.92,187.83) .. controls (224.2,187.83) and (222,185.63) .. (222,182.92) -- cycle ;

\draw (314.5,43.33) node [anchor=north west][inner sep=0.75pt]  [font=\Large,color={rgb, 255:red, 89; green, 74; blue, 226 }  ,opacity=1 ] [align=left] {$\displaystyle u_{( \ell _{2})}$};
\draw (388,5.33) node [anchor=north west][inner sep=0.75pt]  [font=\Large,color={rgb, 255:red, 65; green, 117; blue, 5 }  ,opacity=1 ] [align=left] {$\displaystyle w_{\beta }$};
\draw (207.5,204.5) node [anchor=north west][inner sep=0.75pt]  [font=\Large,color={rgb, 255:red, 0; green, 0; blue, 0 }  ,opacity=1 ] [align=left] {$\displaystyle w$};
\draw (332.5,134.33) node [anchor=north west][inner sep=0.75pt]  [font=\Large,color={rgb, 255:red, 89; green, 74; blue, 226 }  ,opacity=1 ] [align=left] {$\displaystyle u_{( \ell _{1})}$};
\draw (376,191.83) node [anchor=north west][inner sep=0.75pt]  [font=\Large,color={rgb, 255:red, 189; green, 16; blue, 224 }  ,opacity=1 ] [align=left] {$\displaystyle \{u_{( i)}\}_{i=1,\dotsc ,\ell _{1} -1}$};
\draw (137.5,37.83) node [anchor=north west][inner sep=0.75pt]  [font=\Large,color={rgb, 255:red, 184; green, 126; blue, 22 }  ,opacity=1 ] [align=left] {$\displaystyle \{u_{( i)}\}_{i=\ell _{2} +1,\dotsc ,n}$};

\end{tikzpicture}
    \caption{ \small Illustration of the geometric argument used in the proof of \Cref{lem:finite_sample_UB}. If $w$   defines a half-space with  border passing through more than one point, then we can find a slight modification  $w_\beta$  defining a half-space with the same number of interior points, but with  border passing only through $u_{(\ell_1)}$.  }
    \label{fig:intuition-lemma-2}
\end{figure}

 Relabel the reference sample $u^{(n)}$ in such a way that for some ${\ell}_1,\ell_2\in \{1, \dots, n\}$,  $u_{(\ell_1)}= u_j$,
\begin{equation}
    \label{eq:setting-general-position}
     \{u_{(i)}\}_{i=1}^{\ell_1-1}= \{ x: \langle w, x-u_{(\ell_1)}\rangle < 0\} \cap u^{(n)} \ \text{and}\   \{u_{(i)}\}_{i=\ell_1}^{\ell_2}= \{ x: \langle w, x-u_{(\ell_1)}\rangle =  0\} \cap u^{(n)}.
\end{equation}
Note that by construction
\begin{equation}
    \label{eq:ell-1}
   \frac{ \ell_1}{n}=  {\rm TD}^{-}(u_{(\ell_1)};Q_n)+\frac{1}{n}=  {\rm TD}^{-}(u_j;Q_n)+\frac{1}{n}.
\end{equation}
 As $u^{(n)}$ is in general position, it follows that $ u_{(\ell_1)}\notin {\rm aff}( \{u_{(i)}\}_{i=\ell_1+1}^{\ell_2})$, where for a set $S$,   ${\rm aff}(S)$  denotes the smallest affine subspace containing $S$. Since  $u^{(n)}$ is in general position,  it follows that ${\rm aff}( \{u_{(i)}\}_{i=\ell_1+1}^{\ell_2})$ has  dimension  at most $d-2$ and  
 $$ {\rm aff}( \{u_{(i)}\}_{i=\ell_1+1}^{\ell_2})\subset  \{ x: \langle w, x-u_{(\ell_1)}\rangle =  0\}. $$ 
 Then there exists $a>0$,  $v_0 \in \mathbb{S}^{d-1} $ orthogonal to $ w$ and such that 
$$ {\rm aff}(\{u_{(i)}\}_{i=\ell_1+1}^{\ell_2})\subset \{  x: \langle  v_0,x-u_{(\ell_1)}  \rangle =a \} \quad {\rm and}\quad u_{(\ell_1)} \in \{  x: \langle v_0, x -u_{(\ell_1)} \rangle =0 \} . $$
Define the continuous function 
$$ \R^d\times \R \ni (x,\beta)\mapsto f(x,\beta):= \langle    w_\beta, x -u_{(\ell_1)} \rangle, \quad {\rm for }\quad  w_\beta =\frac{w+ \beta v_0}{\|w+ \beta v_0\|}.   $$
Note that, for every $\beta>0$, 
$$ \{u_{(i)}\}_{i=\ell_1+1}^{\ell_2} \subset  \{  x: f(x,\beta) >0  \} \quad {\rm and}\quad  u_{(\ell_1)} \in \{  x: f(x,\beta) =0  \}, $$
while for $\beta=0$ we recover \eqref{eq:setting-general-position}. Therefore,
$$ \{u_{(i)}\}_{i=1}^{\ell_1-1} \subset  \{  x: f(x,0) <0  \},  \quad \{u_{(i)}\}_{i=\ell_2+1}^{n} \subset  \{  x: f(x,0)>0  \} $$ 
and $ \{u_{(j)}\}_{j=\ell_1}^{\ell_2}$ are the unique elements of the reference sample $u^{(n)}$ in $\{  x: f(x,0) =0  \}$. Since $f$ is continuous, for every $i\leq \ell_1-1$ (resp.~$i\geq \ell_2+1$), there exists $\beta_{(i)}>0$ such that   $u_{(i)} \in \{  x: f(x,\beta) <0  \} $ (resp.~$u_{(i)} \in \{  x: f(x,\beta) >0  \} $) for all $|\beta|\leq \beta_{(i)}$. Taking $\beta_*=\min_{i} \beta_{(i)}$, we conclude that 
$$ \{u_{(i)}\}_{i=1}^{\ell_1-1} \subset  \{  x: f(x,\beta_*) <0  \},  \quad  \{u_{(i)}\}_{i=\ell_1+1}^{n} \subset  \{  x: f(x,\beta_*) >0  \}  $$
 and $ u_{(\ell_1)}$ is the unique element of the reference sample $u^{(n)}$ in $\{  x: f(x,\beta_*) =0  \}$. Hence we can apply \cref{lemma:discrete-upper} with $v=w_{\beta_*}$ and $m= \ell_1$ and conclude that 
$$  {\rm BP}({ T}_{Q_n\to P_n}(u_j), P_n) \leq  
\frac{\ell_1}{n}={\rm TD}^{-}(u_j;Q_n)+\frac{1}{n}, $$
where the last equality is \eqref{eq:ell-1}. 

\end{proof}
\section{Proof of \Cref{lemma:technical} and \Cref{lemma:do-not-scape} }

\begin{proof}[Proof of \Cref{lemma:technical}] The proof is divided in several steps. 

{\it Step 1: $f_n$ and $f_n^c$ are locally upper bounded.   } As $\{\partial^c f_n\}_n$ does not
escape to the horizon along any subsequence, there exists a bounded sequence  $\{(x_n,y_n)\}_n$ with $(x_n,y_n)\in \partial^c f_n$.  We can assume without losing generality that $f_n^c(y_n)=0$ for all $n\in \NN$. (Note that $\partial^c f_n = \partial^c (f_n+a) $ and $ f_n^c-a =  (f_n+a)^c $  for all $a\in \R$. ) Then the relation (cf.~\cite[Proposition~3.3.7]{RachevRueschendorf1998}) 
$$ f_n(x_n)= f_n(x_n) + f_n^c(y_n) = h(x_n-y_n) $$
implies that $|f_n(x_n)|$ is bounded. Furthermore, for all $x\in \R^d$ there exists a constant $C(\|x\|)$, depending exclusively on $\|x\|$,  such that 
$$ f_n(x)= f_n(x)+ f_n^c(y_n) \leq   h(x-y_n)  \leq C(\|x\|) , $$
where the first inequality follows from the definition of $f_n^c$. 
The same argument shows that, for every $y\in \R^d$,  
\begin{equation}
    \label{eq:proof-lemma-tech-upper-bound-conjugate}
     f_n^c(y)\leq  |f_n(x_n) |+   h(x_n-y)  \leq C(\|y\|) .
\end{equation}

{\it Step 2: Proof of (i).} 
Fix $\epsilon>0$ and define $v_n= u_n-p_n $ and  $z_n= u_n- \epsilon \frac{v_n}{\|v_n\|} $ for $\epsilon>0$ small so that $ z_n \in  {\rm int}\left( \bigcap_n {\rm dom}(f_n) \right) $ for all $n$. Since $(u_n,p_n)\in\partial^c f_n  $, we have (see \eqref{eq:def-superdiff})
$$ f_n({z}_n)\leq f_n(u_n)+[h({z}_n-p_n)-h(u_n-p_n)], $$
which implies
\begin{align*}
  C-f_n(z_n)&\geq   f_n(u_n)-f_n({z}_n)\\
  &\geq h(u_n-p_n) -h({z}_n-p_n) = h(v_n) -h\left(\left(1-\frac{\epsilon}{\|v_n\|} \right) v_n \right) ,
\end{align*}
where $C=C(\sup_n\|u_n\|)$.  Since $h$ is convex, for any  $s_n\in \partial h\left(\left(1-\frac{\epsilon}{\|v_n\|} \right) v_n \right)$, 
$$ f_n(z_n) \leq C+ h\left(\left(1-\frac{\epsilon}{\|v_n\|} \right) v_n \right) -h(v_n) \leq C -\epsilon \left\langle s_n, \frac{v_n}{\|v_n\|} \right\rangle .$$
Again by convexity of $h$, we have 
$$ {\left\langle s_n, \left(1-\frac{\epsilon}{\|v_n\|} \right) v_n \right\rangle}   \geq  {h\left(\left(1-\frac{\epsilon}{\|v_n\|} \right) v_n \right) -h(0)},$$
so that letting $n$ big enough in order that $\frac{\epsilon}{\|v_n\|}<\frac{1}{2}$,  we get
$$ {\left\langle s_n, \frac{v_n}{\|v_n\|}\right\rangle}  \geq  \frac{h\left(\left(1-\frac{\epsilon}{\|v_n\|} \right) v_n \right) -h(0)}{\left(1-\frac{\epsilon}{\|v_n\|} \right) \|v_n\| }  \to +\infty ,  $$
where the limit follows from \ref{Coercive}. Hence,  for every $\epsilon>0$ there exists a sequence $\{w_{n,\epsilon}\}_n$ (taking $w_{n,\epsilon}= u_n- \epsilon \frac{v_n}{\|v_n\|} $)  such that 
$$ w_{n,\epsilon}\in {\rm int}\!\left(\bigcap_n {\rm dom}(f_n)\right),\quad   
\|u_n-w_{n,\epsilon}\|\leq \epsilon  \quad \text{and}\quad  f_n(w_{n,\epsilon})\to -\infty. $$
In particular, for each $m\in\mathbb{N}$ we can construct points 
$\tilde{w}_{n,m}$ and an index $n_m$ such that 
$\|u_n-\tilde{w}_{n,m}\|\leq 1/m$ and $f_n(\tilde{w}_{n,m})\leq -m$ whenever 
$n\geq n_m$. Setting $w_n'=\tilde{w}_{n,n}$, we obtain a sequence with 
\begin{equation}
    \label{eq:w.prima}
    \|u_n-w_n' \|\leq 1/n\quad {\rm and}\quad f_n(w_n')\leq -n,\quad \text{for all } n\in \NN. 
\end{equation}
 Since $w_n'$ eventually 
lies in the interior of ${\rm dom}(f_n)$, Theorem~3.3 and Proposition~C.4 
in \cite{GangboMcCann.96} guarantee that 
$\partial^c f_n(w_n')\neq \emptyset$.

We now claim that either $f_n(u_n)\to -\infty$ along a subsequence or that  
$\{\partial^c f_n(w_n')\}_n$ escapes to the horizon. Indeed, if 
$\{\partial^c f_n(w_n')\}_n$ does not escape to the horizon, then there exists a bounded 
subsequence $\{q_{n_k}\}_k$ with $q_{n_k}\in \partial^c f_n(w_{n_k}')$. In that case, 
the inequality
\begin{equation}
    \label{eq:diverges-one-or-other}
    f_{n_k}(u_{n_k})\leq f_{n_k}(w_{n_k}')+\bigl(h(u_{n_k}-q_{n_k})-h(w_{n_k}'-q_{n_k})\bigr) \leq -n_k+\bigl(h(u_{n_k}-q_{n_k})-h(w_{n_k}'-q_{n_k})\bigr)
\end{equation}
implies that $f_n(u_{n_k})\to -\infty$, which proves the claim. Hence, the sequence 
$$ w_n = \begin{cases}
   u_n  & \text{if }     -f_n(u_n) \geq \min_{y\in \partial^c f_n(w_{n}')} \|y\| ,\\
   w_n'   & \text{otherwise},    
\end{cases}   $$
satisfies the conclusion of (i). 
To check this claim, we argue by contradiction. First, if $f_{n_k}(w_{n_k})\to -C$ for some subsequence, then by \eqref{eq:w.prima}, $w_{n_k}=u_{n_k}$. Hence, $-f_{n_k}(u_{n_k}) \geq \min_{y\in \partial^c f_{n_k}(w_{n_k}')} \|y\|$, which contradicts \eqref{eq:diverges-one-or-other}. Second, if now $\partial f_{n_k}(w_{n_k})$ is bounded for some subsequence, as $ \|p_n\|\to \infty$ and $(u_n,p_n)\in\partial^c f_n  $, it must follow that $w_{n_k}=w'_{n_k}$ -- or equivalently $\min_{y\in \partial^c f_{n_k}(w_{n_k}')} \|y\| > -f_{n_k}(u_{n_k}) $. Hence,   $\min_{y\in \partial^c f_{n_k}(w_{n_k}')} \|y\| $ is bounded and {\it a fortiori} $f_{n_k}(u_{n_k})$ cannot diverge. This again contradicts \eqref{eq:diverges-one-or-other}. 

{\it Step 3: Proof of (ii).}  Let $K$ be as in \eqref{eq:Compact-contained-intersect}. Since $(w_n,q_n)\in \partial^c f_n$, for every $x\in K$,
   $$f_n(x)\leq f_n(w_n)+[h(x-q_n)-h(w_n-q_n)] \leq f_n(w_n), $$  
so that
\begin{equation}
    \label{eq:sup-fninK}
    \sup_{x\in K} f_n(x) \to -\infty. 
\end{equation}
 For any $y_x\in \partial^c f(x),$ it follows from \eqref{eq:proof-lemma-tech-upper-bound-conjugate} that 
$$ f_n(x)+ C( \|y_x\|)\geq   f_n(x)+ f_n^c(y_x) = h(x-y_x) \geq 0,$$
which implies 
$ f_n(x) \geq - C( \|y_x\|) $.  From here and \eqref{eq:sup-fninK}, we deduce that $\{\partial^c f_{n}(K)\}_n$ escapes to the horizon. 

{\it Proof of (iii).} Note that the limit points of $\{u_n\}_n$ are those of $\{w_n\}_n$. Let $u$ be a limit point of $\{w_n\}_n$. 
For each $n\in \NN$ let $r_n$ be the largest $r>0$ such that there exists a direction $v_n\in \mathbb{S}^{d-1}$ satisfying 
$$ {\rm Cone}\left(r_n, \frac{ \pi }{1+r_n^{-1}}, {v}_n,w_n-q_n\right)  \subset \{ x: h(x) \leq  h(w_n-q_n) \} .$$
 After taking  subsequences, we can assume that $w_n\to u$ and $v_n\to v_*=-v$.  Fix a compact set  $$K\subset \{z: \langle v_*, z -u\rangle>0\}=  \{z: \langle v, z -u\rangle<0\}.$$ By \ref{COne} and the fact that $\|q_n\|\to \infty$,  it follows that $r_n\to +\infty$ as $n\to \infty$, which implies 
\begin{equation}
    \label{eq:cos-to-zero}
    \cos\left( \frac{ \pi/2 }{1+r_n^{-1}} \right) \to 0.
\end{equation}
 Since $K$ is compact, there exists some $\alpha>0$ such that $K\subset \{z: \langle v_*, z -u\rangle\geq \alpha\} $. 
Hence,  for every $x\in K$, 
\begin{align*}
    \left\langle v_n, x-w_n \right\rangle &= \left\langle v_n-v_*, x-w_n \right\rangle+ \left\langle v_*, {x}-w_n \right\rangle\\
    &\geq \left\langle v_*, x-u \right\rangle-\|v_n-v_*\|  \| x-w_n\| - \| w_n-u\|\geq  \alpha -\|v_n-v_*\|  \| x-w_n\| - \| w_n-u\|,
\end{align*}
which yields the existence of $n_0$ such that $ \left\langle v_n, x-w_n \right\rangle\geq \alpha/2$ for $n\geq n_0$. As a consequence,  from \eqref{eq:cos-to-zero},  $r_n\to +\infty$ and the boundedness of $K$ and $w_n$, we derive the existence of $n_0\in \NN$ such that, for every $x\in K$ and $n\geq n_0$, 
$$  \| x-w_n\|  \cos\left( \frac{ \pi /2}{1+r_n^{-1}} \right) \leq \left\langle v_n, x-w_n \right\rangle \leq r_n .$$
 That is, for $n\geq n_0$ we have
$$K\subset {\rm Cone}\left(r_n, \frac{ \pi }{1+r_n^{-1}}, {v}_n,w_n\right),  $$
which by construction of the cone yields  
$$\{x-q_n: x\in K\} \subset {\rm Cone}\left(r_n, \frac{ \pi }{1+r_n^{-1}}, {v}_n,w_n-q_n\right) \subset \{ x: h(x) \leq  h(w_n-q_n) \}. $$
From this we conclude that 
$K \subset \{ x: h(x-q_n) \leq  h(w_n-q_n) \}$ for $n$ large enough. Hence,  (iii) follows from (ii). 
\end{proof}

    \begin{proof}[Proof of \Cref{lemma:do-not-scape}]
    Let $K$ and $K'$ be a compact sets such that   $Q(K)\geq \varepsilon^{\frac{1}{3}} $ and $P(K')\geq \frac{1-\varepsilon^{\frac{1}{2}}}{1-\varepsilon} \in (0,1)$. Since $(\nabla^c f_n)\# Q=  (1-\varepsilon)P+\varepsilon \nu_n$,  then   $$ Q((\nabla^c f_n)^{-1}(K')) \geq (1-\varepsilon) P(K')\geq  1-\varepsilon^{\frac{1}{2}} , $$
   which implies that
   \begin{align*}
       Q( K\cap (\nabla^c f_n)^{-1}(K')) &\geq   1-  Q( \R^d\setminus K) - Q( \R^d\setminus (\nabla^c f_n)^{-1}(K') )\\
       &=  Q( K) + Q((\nabla^c f_n)^{-1}(K') ) -1 \geq \varepsilon^{\frac{1}{3}} - \varepsilon^{\frac{1}{2}}>0. 
   \end{align*}
From this, it follows that $K\cap (\nabla^c f_n)^{-1}(K')$ is nonempty, so that there exists $x_n\in K$ with $\partial^c f_n(x_n)\in K'$ and the result follows.  
\end{proof}

\bibliographystyle{plainnat}
\bibliography{biblio}

\end{document}